\def\section{\@startsection{section}{1}%
\z@{1\linespacing\@plus\linespacing}{1\linespacing}%
{\bf\centering}}
\def\subsection{\@startsection{subsection}{0}%
\z@{\linespacing\@plus\linespacing}{\linespacing}%
{\bf}}
\def\subsubsection{\@startsection{subsubsection}{0}%
\z@{\linespacing\@plus\linespacing}{\linespacing}%
{\bf}}
\newtheorem{theorem}{Theorem}[section]
\newtheorem{corollary}[theorem]{Corollary}
\newtheorem{lemma}[theorem]{Lemma}
\newtheorem{fact}[theorem]{Fact}
\theoremstyle{definition}
\newtheorem{remark}[theorem]{Remark}
\def\RR{\mathbb{R}}\def\R{\mathbb{R}}
\def\NN{\mathbb{N}}
\def\PP{\mathbb{P}}
\def\EE{\mathbb{E}}
\def\Var{\mathbb{V}\mathrm{ar}}
\def\Ent{\mathrm{Ent}}
\def\ga{\gamma}
\def\de{\delta}
\def\ep{\varepsilon}
\def\la{\lambda}
\def\sk{\smallskip}
\def\ov{\overline}
\def\wh{\widehat}
\newcommand{\norm}[1]{\left\lVert#1\right\rVert}
\newcommand{\ind}[1]{1_{\{#1\}}}
\newcommand{\red}[1]{{\color{black}{#1}}} 
\newcommand{\blue}{\textcolor[rgb]{1.00,1.00,1.00}}
\renewcommand{\qed}{\hfill$\square$}
\DeclareMathOperator\dom{dom} 
\begin{document}
\title
{Restricted hypercontractivity on the Poisson space}
\author{Ivan Nourdin \and Giovanni Peccati \and Xiaochuan Yang}

\address{Ivan Nourdin, Universit\'e de Luxembourg, Maison du Nombre, 6 avenue de la Fonte, L-4364 Esch-sur-Alzette, Grand
Duchy of Luxembourg.}
\email{ivan.nourdin@uni.lu}

\address{Giovanni Peccati, Universit\'e de Luxembourg, Maison du Nombre, 6 avenue de la Fonte, L-4364 Esch-sur-Alzette, Grand
Duchy of Luxembourg.}
\email{giovanni.peccati@uni.lu}

\address{Xiaochuan Yang, Universit\'e de Luxembourg, Maison du Nombre, 6 avenue de la Fonte, L-4364 Esch-sur-Alzette, Grand
Duchy of Luxembourg.}
\email{xiaochuan.yang@uni.lu}

\thanks{\emph{Key-words}: hypercontractivity, improved Poincar\'e inequality, modified logarithmic Sobolev inequality, Poisson processes, Emery-Bacry criteria, superconcentration.
 \\ \medskip
\noindent
2010 {\it MS Classification}: 60H07; 60E15; 60E05  \\
}

\begin{abstract} We show that the Ornstein-Uhlenbeck semigroup associated with a general Poisson random measure is hypercontractive, whenever it is restricted to non-increasing mappings on configuration spaces. We deduce from this result some versions of Talagrand's $L^1$-$L^2$ inequality for increasing and concave mappings, and we build examples showing that such an estimate represents a strict improvement of the classical Poincar\'e inequality. We complement our finding with several results of independent interest, such as gradient estimates. 

\end{abstract}

\maketitle


\baselineskip 0.5 cm

\bigskip \medskip

\section{Introduction} 

The aim of this paper is to study some form of {\it restricted hypercontractvity} for the Ornstein-Uhlenbeck semigroup associated with a Poisson random measure, with specific focus on the derivation of variance estimates in the spirit of Talagrand's $L^1$-$L^2$ inequality \cite{CEL, T}. This complements several findings about concentration and logarithmic Sobolev inequalities on configuration spaces, whose interest has recently been revived by geometric applications: see \cite{B, BP, BR, GLO, PR, R} for results in a geometric context, as well as the classical references \cite{AL, BL, BT, BHP, HP, W} for theoretical foundations. 

We start by recalling some classical facts about Gaussian measures, to which our findings should be compared. 

\sk

For the rest of the paper, generic random objects are defined on an appropriate common probability space $(\Omega, \mathcal{F}, \mathbb{P})$, with $\mathbb{E}$ and $\Var$ denoting, respectively, expectation and variance with respect to $\mathbb{P}$. When dealing with a specific probability measure $\mu$ on a measurable space $(A, \mathcal{A})$ and with a mapping $f : A\to \mathbb{R}$, we write 
$\mathbb{E}_\mu[ f] := \int_A f d\mu$, whenever this expression is well-defined.

\sk
\sk

\subsection{Some Gaussian estimates}

Standard references for the results discussed in this subsection are e.g. \cite{LS2000, BGL, L2000, NPbook}. For $k\geq 1$, let $\gamma_k$ denote the standard Gaussian measure on $\R^k$, that is: $\gamma_k(dz) := (2\pi )^{-k/2} e^{-\| x\|^2/2}dx$, where $\| \cdot\|$ stands for the Euclidean norm, and $dx = dx_1\cdots dx_k$. The {\it Poincar\'e inequality} states that, for every smooth $f\in L^2(\ga_k)$, 
\begin{align}\label{e:poin}
\Var_{\ga_k} (f):= \int f^2 d\ga_k - \left(\int f d\ga_k\right)^2 \le \sum_{i=1}^k \EE_{\ga_k} [(\partial_i f )^2],
\end{align}
where $\partial_i := \partial/{ \partial x_i}$, and the {\it logarithmic Sobolev inequality} states that, for $f$ smooth,
\begin{align}\label{e:logsob}
\Ent_{\ga_k} (f^2):=\int f^2\log f^2 d\ga_k - \int f^2d\ga_k\log\left( \int f^2 d\ga_k\right)\le 2 \sum_{i=1}^k \EE_{\ga_k} [(\partial_i f )^2],
\end{align}
which, by a change of variable and the chain rule for derivatives, is equivalent to the estimate
\begin{align}\label{e:LS_L1}
\Ent_{\ga_k}(f)\le \frac12 \sum_{i=1}^k \EE_{\ga_k}\left[ \frac{(\partial_i f)^2}{f}\right],
\end{align}
for every $f$ positive and smooth. Note that $\ga_k$ is the invariant distribution of the {\it Ornstein-Uhlenbeck semigroup}, which (in its Mehler's form) is defined as 
\begin{align*}
P^{\ga_k}_t f(x)= \int f(e^{-t}x+\sqrt{1-e^{-2t}}y)\ga_k(dy), \quad f\in L^1(\ga_k), \quad t\geq 0.
\end{align*}

The logarithmic Sobolev inequality \eqref{e:logsob} (or \eqref{e:LS_L1}) is actually equivalent to the hypercontractivity of $\{P_t^{\ga_k}\}$ \cite[p. 246]{BGL}, and the latter property is the key to prove Talagrand's $L^1$-$L^2$ bound for the variance \cite{CEL, T}. Recall that $\{P^{\ga_k}_t\}$ is {\it hypercontractive} in the following sense: for $p>1$ and $t\ge 0$,
\begin{align}\label{e:hyper_G}
\norm{P^{\ga_k}_t f}_{1+e^{2t}(p-1)} \le \norm{f}_p
\end{align}
where $\norm{g}^q_q = \EE_{\ga_k}[|g|^q]$ for any $q>1$, and Talagrand's $L^1$-$L^2$ bound on the Gaussian space \cite[Th.~1, $\kappa=\rho=1$]{CEL} states that, for each $f\in L^2({\ga_k})$ smooth, 
\begin{align} \label{e:tl1l2}
\Var_{\ga_k}(f) \le 4e^2 \sum_{i=1}^k  \frac{\norm{\partial_i f}^2_2}{1+\log(\norm{\partial_i f}_2/\norm{\partial_i f}_1)}. 
\end{align}

\smallskip

A bound analogous to \eqref{e:tl1l2} was originally proved in \cite{T} for functions defined on hypercubes, and later generalized in \cite{CEL} to the framework of Markov semigroups enjoying a form of hypercontractivity, yielding \eqref{e:tl1l2} as a special case. 
It is clear that Talagrand's bound strictly improves upon the Poincar\'e inequality \eqref{e:poin}, and that such an improvement becomes substantial whenever the $\partial_i f$'s are somehow close to an indicator function. For a self-contained proof of this inequality, and for applications to the superconcentration phenomenon, see \cite[Ch.5]{C}.  

\smallskip

It is a standard exercise to suitably adapt the approach of \cite{CEL, T} in order to lift \eqref{e:tl1l2} to an infinite-dimensional setting (we leave the details of the proof to the reader). To see this, let $(\mathbb X, \mathcal X)$ be a measurable space, let $\lambda$ be a $\sigma$-finite measure on it, and consider a {\it Gaussian measure}  $G$ with intensity $\lambda$ (see \cite[p. 24]{NPbook} for a definition). Then, the Ornstein-Uhlenbeck semigroup associated with $G$ is hypercontractive (see \cite[Section 2.8.3]{NPbook}). Moreover, given a square-integrable functional $F \in \sigma(G)$ in the domain of the Malliavin derivative $D$ associated with $G$ (see \cite[Section 2.3]{NPbook}), one can prove that
\begin{align} \label{e:gl1l2}
\Var(F)  \le 4e^2 \int_{\mathbb X}  \frac{\norm{D_x F}^2_2}{1+\log(\norm{D_x F}_2/\norm{D_x F}_1)} \lambda(dx), 
\end{align}
where in this case $\|D_x F\|_p^p := \mathbb{E}[| D_x F|^p] $. Relation \eqref{e:gl1l2} is a strict improvement of the classical Poincar\'e inequality for infinite-dimensional Gaussian fields, see e.g. \cite[Exercise 2.11.1]{NPbook}, and contains \eqref{e:tl1l2} as a special case.

\smallskip

One of the principal achievements of the present paper is the derivation of estimates analogous to \eqref{e:gl1l2} in the framework of the non-hypercontractive Ornstein-Uhlenbeck semigroup associated with a Poisson random measure, under some special assumptions on the random variable $F$.

\subsection{Basic inequalities for Poisson measures}

Let $(\mathbb X, \mathcal X, \lambda)$ be a $\sigma$-finite measure space. In what follows, we denote by $\eta = \{\eta(A) : A \in \mathcal{X}\}$ a {\it Poisson random measure} with intensity $\lambda$. Here, we observe that $\eta$ is a random element taking values in the space of countably supported, integer-valued mesures on $(\mathbb X, \mathcal X)$, and refer the reader to Section \ref{s:calculsto} for precise definitions. Poisson random measures are one of the most fundamental objects of modern probability theory, emerging in a number of theoretical and applied domains such as L\'evy processes, Brownian excursion theory, stochastic geometry, extreme values and queueing theory  -- see e.g. \cite{LP, PR} for an overview. Note that, in the case where $\mathbb {X} = \{x\}$ is a singleton, then $\eta$ can be identified with a one-dimensional Poisson random variable with parameter $\alpha := \lambda( \{x\})$. From now on, we write $\|F\|_p :=\mathbb{E}[|F|^p]^{1/p}$, $p\geq 1$. 

\smallskip

For every $x\in \mathbb{X}$, we denote by $D_x$ the {\it add-one cost operator} at $x$, defined as follows:
for every $F\in \sigma(\eta)$, $D_x F(\eta) := F(\eta+\delta_x) - F(\eta)$, where $\delta_x$ stands for the Dirac mass centered at $x$. We also write $\{P_t \}$ to indicate the Ornstein-Uhlenbeck semigroup associated with $\eta$, which is formally defined in formula \eqref{e:mehler} below (once again in its Mehler's form). We start by recalling the classical {\it Poincar\'e inequality} on the Poisson space (see e.g. \cite[p.~193]{LP}).
\begin{fact}[Poincar\'e inequality]\label{f:poincare}
Suppose $F\in L^2(\mathbb{P})$. Then
\begin{align}\label{e:poisspoin}
\Var(F) \le \int \EE[|D_x F|^2] \la(dx).
\end{align}
\end{fact}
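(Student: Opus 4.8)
The plan is to exploit the Wiener--It\^o chaos decomposition of square-integrable Poisson functionals, which reduces both sides of \eqref{e:poisspoin} to explicit series in the $L^2(\la^n)$-norms of the chaos kernels, after which the inequality collapses to the trivial bound $n\ge 1$. Recall that any $F\in L^2(\PP)$ measurable with respect to $\eta$ admits a representation $F = \EE[F] + \sum_{n\ge 1} I_n(f_n)$, where $I_n$ is the $n$-fold multiple integral with respect to the compensated measure $\eta - \la$ and the kernels $f_n\in L^2(\la^n)$ are symmetric. By the isometry and orthogonality of multiple integrals, $\EE[I_n(f_n)I_m(f_m)] = \ind{n=m}\, n!\,\norm{f_n}_{L^2(\la^n)}^2$, so that
\begin{align*}
\Var(F) = \sum_{n\ge 1} n!\,\norm{f_n}_{L^2(\la^n)}^2.
\end{align*}

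Next I would compute the right-hand side of \eqref{e:poisspoin}. The key structural fact is that the add-one cost operator acts on chaoses by lowering the order and freezing one variable: $D_x F = \sum_{n\ge 1} n\, I_{n-1}(f_n(x,\cdot))$, where $f_n(x,\cdot)$ is the kernel $f_n$ with its first argument set equal to $x$. Applying the isometry once more, for $\la$-a.e. $x$,
\begin{align*}
\EE[|D_x F|^2] = \sum_{n\ge 1} n^2\, (n-1)!\,\norm{f_n(x,\cdot)}_{L^2(\la^{n-1})}^2 .
\end{align*}
Integrating in $x$ and invoking Fubini to recognise $\int_{\mathbb{X}} \norm{f_n(x,\cdot)}_{L^2(\la^{n-1})}^2\,\la(dx) = \norm{f_n}_{L^2(\la^n)}^2$, and using $n^2(n-1)! = n\cdot n!$, yields
\begin{align*}
\int \EE[|D_x F|^2]\,\la(dx) = \sum_{n\ge 1} n\cdot n!\,\norm{f_n}_{L^2(\la^n)}^2 .
\end{align*}

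Comparing the two displays, the inequality \eqref{e:poisspoin} follows at once from $n\ge 1$ for every index appearing in the sums; the gap between the two sides equals $\sum_{n\ge 2}(n-1)\,n!\,\norm{f_n}_{L^2(\la^n)}^2$, which makes transparent that equality holds precisely when $F = \EE[F] + I_1(f_1)$, i.e. when $F$ reduces to a single first chaos, and that the higher chaoses are responsible for the slack --- the very slack that the improved inequalities of this paper aim to quantify.

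I expect the only genuine work to lie in the justification rather than in the arithmetic. One must argue that the termwise action of $D_x$ and the interchange of expectation, summation and $\la$-integration are legitimate; for $F$ in the domain of $D$ this is standard, and for a general $F\in L^2(\PP)$ the inequality is either read off the (possibly divergent) series or obtained by a truncation argument, the right-hand side being trivially infinite when $F\notin\dom D$. An alternative, semigroup-flavoured route --- more in keeping with the rest of the paper --- starts from the covariance identity $\Var(F) = \int_0^\infty\!\int_{\mathbb{X}} \EE[D_x F\cdot D_x P_t F]\,\la(dx)\,dt$, invokes the commutation relation $D_x P_t = e^{-t} P_t D_x$ together with the $L^2(\PP)$-contractivity of $P_t$ to bound the integrand by $e^{-t}\,\EE[|D_x F|^2]$, and concludes via $\int_0^\infty e^{-t}\,dt = 1$; here the main obstacle is establishing the covariance representation itself, which again rests on the chaos expansion.
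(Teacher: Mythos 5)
The paper does not prove this statement: it is quoted as a classical fact with a pointer to \cite[p.~193]{LP}, so there is no in-paper argument to compare against. Your chaos-expansion proof is correct and is essentially the standard one from that reference: both sides collapse to series in $n!\,\norm{f_n}_{L^2(\la^n)}^2$, and the inequality is just the comparison $n!\le n\cdot n!$. The one step that genuinely needs justification --- that for $F\in L^2(\PP)\setminus\dom D$ the right-hand side of \eqref{e:poisspoin} is infinite, so the bound is vacuous there --- is precisely the equivalence ``$F\in\dom D$ if and only if $DF\in L^2(\PP\otimes\la)$'' for the pathwise add-one-cost operator recorded in Section~\ref{s:calculsto} (citing \cite[Th.~3]{L}), and you correctly flag it; your alternative semigroup route is also sound and is the same interpolation scheme the paper reuses, with hypercontractivity inserted, in the proof of Theorem~\ref{t:tal}.
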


In a Poisson setting, the estimate \eqref{e:poisspoin} plays a role analogous to \eqref{e:poin} (and of its infinite-dimensional counterpart) on a Gaussian space. On the other hand, as observed e.g. in the classical references \cite{BL, Surgailis}, the Ornstein-Uhlenbeck semigroup $\{P_t\}$ associated with $\eta$ is {\it not} hypercontractive, and a logarithmic Sobolev inequality analogous to \eqref{e:logsob} {\it cannot} hold, even in the simple case in which $\mathbb X$ is a singleton (see the discussion contained in Section \ref{sss:onepoiss} below). The next result is a {\it modified logarithmic Sobolev inequality} proved by Wu \cite[Th. 1.1]{W}, see also \cite{AL,BL, Chafai}, as well as \cite[p. 212]{PR}.

\begin{fact}[{Modified} logarithmic Sobolev inequality] Let $F\in L^1(\mathbb{P})$ be such that $F>0$ a.s.. Then, 
\begin{align}\label{e:wuls}
\mathrm{Ent}(F) := \mathbb{E}(\Phi(F) ) - \Phi(\mathbb{E}(F))  \le \EE\int [D_x\Phi(F)-\Phi'(F)D_xF] \la(dx),
\end{align}
where $\Phi(u)=u\log u$. 
\end{fact}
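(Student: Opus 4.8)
The plan is to run a semigroup interpolation (de Bruijn) argument for the $\Phi$-entropy, with $\Phi(u)=u\log u$, and to reduce the matter to an elementary two-point inequality. First I would record the three structural facts about the Ornstein--Uhlenbeck semigroup $\{P_t\}$ that drive the proof: its ergodicity $P_tF\to\EE[F]$ as $t\to\infty$ (so that $P_0F=F$ and $P_\infty F=\EE[F]$); the integration-by-parts identity $\EE[G\,(-LF)]=\EE\int D_xG\,D_xF\,\la(dx)$ attached to its generator $L$ (equivalently $-L=\delta D$, with $\delta$ the adjoint of $D$); and the commutation relation $D_xP_tF=e^{-t}P_t(D_xF)$. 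All three follow from the chaotic/Mehler description of $\{P_t\}$ encoded in \eqref{e:mehler}.

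Writing $\Ent(F)=-\int_0^\infty \tfrac{d}{dt}\EE[\Phi(P_tF)]\,dt$ and differentiating under the expectation gives $\tfrac{d}{dt}\EE[\Phi(P_tF)]=\EE[\Phi'(P_tF)\,LP_tF]$, so the integration-by-parts identity yields the entropy-dissipation representation
\begin{align*}
\Ent(F)
&=\int_0^\infty \EE\int D_x[\Phi'(P_tF)]\,D_x[P_tF]\,\la(dx)\,dt\\
&=\int_0^\infty \EE\int \big(\Phi'(b_t)-\Phi'(a_t)\big)(b_t-a_t)\,\la(dx)\,dt,
\end{align*}
where $a_t:=P_tF(\eta)$ and $b_t:=(P_tF)(\eta+\delta_x)$. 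Since $\Phi$ is convex the integrand is nonnegative; in fact a one-line computation shows $(\Phi'(b)-\Phi'(a))(b-a)=c(b,a)+c(a,b)$, where $c(v,u):=\Phi(v)-\Phi(u)-\Phi'(u)(v-u)\ge 0$ is the Bregman divergence of $\Phi$. The right-hand side of \eqref{e:wuls} is precisely $\EE\int c(b_0,a_0)\,\la(dx)$ (indeed $D_x\Phi(F)-\Phi'(F)D_xF=c(b_0,a_0)$, which also confirms that the target bound is nonnegative). Hence the whole inequality amounts to dominating the time-integral of the \emph{symmetrized} Bregman divergence by the single Bregman divergence evaluated at $t=0$.

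This reduction is the heart of the argument and the step I expect to be the main obstacle, since it is where the specific choice $\Phi(u)=u\log u$, i.e. $\Phi'(u)=\log u+1$, must be combined with the contraction built into the semigroup. One route is to use the commutation relation, which gives $b_t-a_t=D_xP_tF=e^{-t}\,P_t(D_xF)$, together with the concavity of $\log$ to control $\Phi'(b_t)-\Phi'(a_t)=\log(b_t/a_t)$ by a Jensen/variational bound, thereby majorizing the integrand at time $t$ by a quantity whose time-integral telescopes to $c(b_0,a_0)$. An alternative, more robust route is to first establish the estimate in the elementary case $\mathbb X=\{x\}$, where $\eta$ is a single Poisson$(\alpha)$ variable and $\{P_t\}$ is the semigroup of the $M/M/\infty$ birth--death chain: there \eqref{e:wuls} becomes the classical discrete modified log-Sobolev inequality $\Ent(f(\eta))\le \alpha\,\EE\big[f(\eta+1)\log\tfrac{f(\eta+1)}{f(\eta)}-f(\eta+1)+f(\eta)\big]$, and the general $\sigma$-finite intensity follows by tensorization, approximating $\la$ by finite sums of atoms and passing to the limit monotonically.

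Finally I would dispatch the technical points. The differentiation under the integral and the convergence of the time-integral near $t=0$ and $t=\infty$ are justified by the spectral-gap and smoothing estimates available from the chaos expansion, while the positivity/integrability hypothesis is handled by first proving the inequality for $F$ bounded and bounded away from $0$ (say $F$ replaced by $(F\vee\varepsilon)\wedge M$, or by $P_sF$) and then letting the truncation parameters tend to their limits, using monotone/dominated convergence together with the lower semicontinuity of the entropy functional.
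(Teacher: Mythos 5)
A preliminary remark: the paper does not prove this Fact at all — it is imported verbatim from Wu \cite[Th.~1.1]{W} (see also \cite{AL,BL,Chafai}) — so your attempt can only be measured against the proofs in those references. Your setup is consistent with them: the de Bruijn identity $\Ent(F)=\int_0^\infty\EE\int(\Phi'(b_t)-\Phi'(a_t))(b_t-a_t)\,\la(dx)\,dt$ with $a_t=P_tF(\eta)$, $b_t=P_tF(\eta+\delta_x)$, the identification of the right-hand side of \eqref{e:wuls} with the one-sided Bregman divergence $\EE\int c(b_0,a_0)\,\la(dx)$, and the decomposition $(\Phi'(b)-\Phi'(a))(b-a)=c(b,a)+c(a,b)$ are all correct.

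The genuine gap is that the entire content of the theorem sits in the step you yourself flag as ``the main obstacle'', and neither route you propose closes it. For the first route: writing $b_t=e^{-t}\tilde b_t+(1-e^{-t})a_t$ with $\tilde b_t:=P_t\bigl(F(\cdot+\delta_x)\bigr)$, concavity of $\log$ gives $\log b_t-\log a_t\ge e^{-t}(\log\tilde b_t-\log a_t)$, which is an inequality in the wrong direction on the branch $\tilde b_t>a_t$; the bounds one actually obtains this way (combined with the joint convexity of $(u,v)\mapsto u\log(u/v)-u+v$ and Jensen applied to the Mehler representation) deliver the \emph{symmetrized} estimate $\Ent(F)\le\EE\int D_xF\,D_x\log F\,\la(dx)=\EE\int[c(b_0,a_0)+c(a_0,b_0)]\,\la(dx)$, which is strictly weaker than \eqref{e:wuls}. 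You give no mechanism by which the time integral of the symmetrized divergence ``telescopes'' down to the single one-sided divergence at $t=0$; that is exactly the nontrivial point. The second route is circular at its base case — in the one-point setting the ``classical discrete modified log-Sobolev inequality'' you invoke \emph{is} the statement \eqref{e:wuls}, so quoting it proves nothing — and the passage from finitely many atoms to a general $\sigma$-finite, possibly diffuse, $\la$ is not a monotone limit: a Poisson functional depends on the locations of the points and not merely on the counts over a partition, so the discretized add-one-cost operators do not converge to $D_xF$ without a genuine approximation argument. The proofs that work supply precisely the missing ingredient: Wu applies It\^o's formula for $\Phi$ along the martingale $M_s=\EE[F\mid\mathcal F_s]$, so that the jump terms are automatically the one-sided divergences $c(M_s,M_{s-})$, and then uses joint convexity of $c$, Jensen for conditional expectations, and the Mecke/compensation formula \eqref{mecke}; Bobkov--Ledoux instead pass to the Poisson limit of the Bernoulli modified LSI. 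You need to import one of these mechanisms, or treat the statement as a citation, as the paper does.
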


{For the rest of the paper we adopt the usual convention $\Phi(0) = 0\log 0 := 0$}. The bound \eqref{e:wuls} immediately yields the next statement, containing in particular an estimate analogous to \eqref{e:LS_L1}.

\begin{corollary}[See {\cite[Cor. 2.1 and 2.2]{W}}] Let $F\in L^1(\mathbb{P})$ be such that $F>0$ a.s.. Then,
\begin{align}\label{e:logsob^2}
\mathrm{Ent}(F)\le \EE\int \min\left(\frac{|D_xF|^2}{F}, D_xF D_x\log F\right) \la(dx).
\end{align}
\end{corollary}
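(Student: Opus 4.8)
The plan is to deduce the corollary directly from the modified logarithmic Sobolev inequality \eqref{e:wuls}, by showing that its integrand $D_x\Phi(F)-\Phi'(F)D_xF$ is pointwise dominated by \emph{each} of the two quantities appearing inside the minimum. Once these two pointwise inequalities are verified for every fixed realization of $\eta$ and every $x\in\mathbb X$, the claim follows by taking the minimum of the two bounds and integrating against $\EE$ and $\la$, using monotonicity of the expectation and of the integral.

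First I would rewrite the integrand in closed form. Abbreviating $a:=F(\eta)$ and $b:=F(\eta+\delta_x)$ -- both strictly positive by hypothesis -- and recalling that $\Phi(u)=u\log u$ has derivative $\Phi'(u)=1+\log u$, a short computation gives
\begin{align*}
D_x\Phi(F)-\Phi'(F)D_xF=\big(b\log b-a\log a\big)-(1+\log a)(b-a)=b\log(b/a)-(b-a).
\end{align*}
Moreover $D_x\log F=\log b-\log a=\log(b/a)$, so that the two right-hand quantities in \eqref{e:logsob^2} read $D_xF\,D_x\log F=(b-a)\log(b/a)$ and $|D_xF|^2/F=(b-a)^2/a$.

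The key observation is that both target inequalities collapse to the single elementary estimate $\log t\le t-1$ (valid for all $t>0$), after dividing by $a>0$ and setting $t:=b/a>0$. Indeed, the two desired bounds become, respectively,
\begin{align*}
t\log t-(t-1)\le (t-1)\log t \qquad\text{and}\qquad t\log t-(t-1)\le (t-1)^2 .
\end{align*}
The first rearranges at once to $\log t\le t-1$. The second rearranges to $t\log t\le (t-1)^2+(t-1)=t(t-1)$, i.e., dividing by $t>0$, again to $\log t\le t-1$. Since $\log t\le t-1$ holds for every $t>0$, both pointwise bounds are established.

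I expect the only genuine content to be the recognition that the second, a priori quadratic, bound also reduces to $\log t\le t-1$; this is the step most likely to be mishandled, since one is tempted to attack it by a less transparent calculus argument (studying the sign of $(t-1)^2-t\log t+(t-1)$) rather than by the direct division above. The remaining points are routine: the hypothesis $F>0$ a.s. guarantees that all logarithms and quotients are well-defined (so the convention $\Phi(0)=0$ is not even needed here), and the passage from the pointwise inequalities to the integrated statement is immediate by monotonicity, taking the minimum before integrating.
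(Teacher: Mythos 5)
Your proposal is correct and matches the route the paper intends: the statement is presented as an immediate consequence of the modified logarithmic Sobolev inequality \eqref{e:wuls}, and your reduction of both pointwise bounds to $\log t\le t-1$ with $t=b/a$ is exactly the standard (and correct) way to make that "immediate" step explicit. The only point you pass over silently is that $F(\eta+\delta_x)>0$ for a.e.\ $(\omega,x)$ under $\mathbb{P}\otimes\lambda$ (a routine consequence of Mecke's formula \eqref{mecke} applied to $1_{\{F\le 0\}}$), which the paper glosses over as well.
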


\sk

A direct application of \cite[Theorem 6.1]{BT} implies that, as a consequence of \eqref{e:logsob^2}, the following form of {\it weak hypercontractivity} holds: for every bounded $F\in \sigma(\eta)$ and every $t\ge 0$, 
\begin{align}\label{e:whc}
\|e^{P_t F} \|_{e^t}\leq \|e^F\|_1.
\end{align}
To the best of our expertise, it does not seem possible to use \eqref{e:whc} in order to deduce any meaningful extension of \eqref{e:gl1l2} to generic functionals of $\eta$.


\subsection{Main results}\label{ss:main}

The goal of this note is twofold. On the one hand, we will determine a subset of functionals of a general Poisson point process on which the Ornstein-Uhlenbeck semigroup enjoys a hypercontractivity property analogous to \eqref{e:hyper_G}; we call such a property {\it restricted hypercontractivity}. On the other hand, we will apply the restricted hypercontractivity of $\{P_t\}$ in order to obtain several $L^1$-$L^2$ bounds in the spirit of \eqref{e:gl1l2}.

\sk

Our first result is the following. 


\begin{theorem}[Restricted hypercontractivity]\label{t:hyper}
Let $\eta$ be a Poisson point process on a measurable space $(\mathbb X, \mathcal X)$ with $\sigma$-finite intensity measure $\la$. Let $F\ge 0$ be $\sigma(\eta)$-measurable and such that $D_x F \le 0$ for all $x\in\mathbb{X}$. Then, for $t\ge 0$ and $p>1$, 
\begin{align}\label{e:hyper_P}
\norm{P_t F}_{1+(p-1)e^t}\le \norm{F}_p,
\end{align}
where $\{P_t\}$ is the Ornstein-Uhlenbeck semigroup associated with $\eta$.
\end{theorem}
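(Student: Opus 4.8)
The plan is to adapt Gross's classical equivalence between hypercontractivity and a logarithmic Sobolev inequality, \emph{differentiating the moving $L^q$-norm along the semigroup}, while exploiting the sign condition $D_xF\le 0$ at two distinct places. I would first record two structural facts about $\{P_t\}$ acting on the admissible class $\mathcal C=\{F\ge 0:\ D_xF\le 0\ \forall x\}$. Using the Mehler representation of $P_t$ (thinning of $\eta$ with retention probability $e^{-t}$, superposed with an independent Poisson measure of intensity $(1-e^{-t})\la$) together with the commutation relation
\begin{align*}
D_xP_tF=e^{-t}P_t(D_xF),
\end{align*}
one sees that $P_t$ maps $\mathcal C$ into itself: $P_tF\ge 0$ because $F\ge 0$, and $D_xP_tF\le 0$ because $D_xF\le 0$. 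Hence $G:=P_tF$ is again non-negative and non-increasing for every $t$, which is precisely the feature used below. After a routine reduction (assume first $\ep\le F\le M$, recover the general case by truncation and monotone convergence), all the differentiations under the integral sign are justified.

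Next, set $q=q(t):=1+(p-1)e^t$, so that $q(0)=p$, $q'(t)=q(t)-1$, and put $\Lambda(t):=\|P_tF\|_{q(t)}$. Writing $L$ for the generator of $\{P_t\}$, using $\partial_tG=LG$ and the integration-by-parts identity $\EE[G^{q-1}(-L)G]=\EE\int D_x(G^{q-1})\,D_xG\,\la(dx)$, a direct computation gives
\begin{align*}
q^2\,\EE[G^q]\,\frac{d}{dt}\log\Lambda(t)=(q-1)\,\mathrm{Ent}(G^q)-q^2\,\EE\int D_x(G^{q-1})\,D_xG\,\la(dx).
\end{align*}
Thus $\Lambda$ is non-increasing — which is exactly \eqref{e:hyper_P}, since $\Lambda(0)=\|F\|_p$ — as soon as
\begin{align*}
(q-1)\,\mathrm{Ent}(G^q)\le q^2\,\EE\int D_x(G^{q-1})\,D_xG\,\la(dx).\tag{$\ast$}
\end{align*}
This reduces the theorem to establishing $(\ast)$ for every non-negative non-increasing $G$.

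To prove $(\ast)$ I would bound its left-hand side through the modified logarithmic Sobolev inequality \eqref{e:wuls} applied to $H:=G^q$, which expresses $\mathrm{Ent}(G^q)$ as the expected integral of the non-negative Bregman-type term $\Phi(H(\eta+\delta_x))-\Phi(H(\eta))-\Phi'(H(\eta))\,D_xH$. Writing $a:=G(\eta+\delta_x)$ and $b:=G(\eta)$, it then suffices to prove the pointwise estimate
\begin{align*}
(q-1)\Big[q\,a^q\log\tfrac ab-a^q+b^q\Big]\le q^2\,(a^{q-1}-b^{q-1})(a-b)\qquad(0\le a\le b),
\end{align*}
and to integrate it against $\la(dx)\otimes\PP$. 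By homogeneity one may normalize $b=1$ and reduce to a one-variable inequality in $s=a/b\in[0,1]$; both sides and their first derivatives vanish at $s=1$, so the claim follows from a sign analysis of the second derivative (elementary calculus).

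The main obstacle is precisely this last pointwise inequality, and the role of monotonicity must be stressed: the restriction $a\le b$ is indispensable. Indeed, for $a>b$ the left-hand side grows like $(q-1)q\,a^q\log a$ while the right-hand side only grows like $q^2a^q$, so the inequality \emph{fails} for large $a/b$ — in agreement with the fact that $\{P_t\}$ is not hypercontractive on all of $\sigma(\eta)$. Monotonicity thus intervenes twice: it guarantees that $\{P_t\}$ preserves the admissible class (so that $G=P_tF$ remains non-increasing), and it confines $a/b$ to $[0,1]$, which is exactly the range where the decisive pointwise bound holds.
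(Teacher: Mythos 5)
Your proposal follows the same top-level strategy as the paper: Gross's argument of differentiating $t\mapsto\|P_tF\|_{q(t)}$ with $q(t)=1+(p-1)e^t$, which reduces the theorem exactly to your inequality $(\ast)$ --- this is the paper's Corollary \ref{c:G^q}, since $\EE\int D_x(G^{q-1})\,D_xG\,\la(dx)=\EE[\Gamma(G^{q-1},G)]$ --- applied to $G=P_tF$; your explicit remark that the admissible class is preserved by $P_t$ (via the Mehler form and \eqref{e:commutativity}) is a point the paper uses only implicitly, and is worth making. Where you genuinely diverge is in the proof of $(\ast)$: the paper first passes from Wu's inequality \eqref{e:wuls} to the weaker form $\mathrm{Ent}(H)\le\EE\int (D_xH)^2/H\,\la(dx)$ of \eqref{e:logsob^2} with $H=G^q$, and then invokes the purely algebraic Lemma \ref{l:tech}, whose factor $\left(\left|G(\eta+\delta_x)/G\right|^q\vee 1\right)$ equals $1$ precisely because $G$ is non-increasing; you instead keep the sharper Bregman integrand of \eqref{e:wuls} and reduce to the one-variable inequality $(q-1)\left[qs^q\log s-s^q+1\right]\le q^2(1-s^{q-1})(1-s)$ on $[0,1]$. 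That inequality is true (it also follows by concatenating the elementary bound $u\log(u/v)-u+v\le(u-v)^2/v$ with Lemma \ref{l:tech}), and your diagnosis of where monotonicity enters --- class preservation plus confinement of $a/b$ to $[0,1]$ --- is exactly right. One caution: your ``sign analysis of the second derivative'' is not a one-liner, because the difference $f=R-L$ of the two sides is \emph{not} convex on $[0,1]$ when $q>2$: one computes $f''(s)=q^2(q-1)s^{q-3}\left[s(q-1)(1-\log s)-(q-2)\right]$, which is negative near $s=0$. The argument still closes, because $s\mapsto s(1-\log s)$ is increasing on $(0,1]$, so $f''$ changes sign at most once, from negative to positive; combined with $f'(0^+)=-q^2<0=f'(1)$ this forces $f'\le 0$ on $(0,1)$, hence $f\ge 0$ there. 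With that step spelled out your proof is complete; the paper's route through Lemma \ref{l:tech} avoids the calculus entirely at the price of the slightly lossier intermediate bound \eqref{e:logsob^2}, and both yield the same constant $q^2/(q-1)$.
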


\begin{remark}
\begin{itemize}
\item[(a)] One difference between the restricted hypercontractivity of the Ornstein-Uhlenbeck semigroup in the Poisson setting and that in the Gaussian setting \eqref{e:hyper_G} is the factor $e^{t}$ replacing $e^{2t}$ in $\eqref{e:hyper_P}$. Such a factor is consistent with the constants appearing in the logarithmic Sobolev inequality for decreasing functionals, obtained from the forthcoming Corollary \ref{c:G^q} by setting $q=2$ (that should be compared with \eqref{e:logsob}). See e.g. \cite[Section 2.8 and Section 5.4]{LS2000}.

\item [(b)] The computations leading to the proof of Theorem \ref{t:hyper} (in particular, Corollary \ref{c:G^q} combined with a standard implementation of Herbst's argument) also implicitly yield the following concentration estimate from \cite[Proposition 3.1]{W}: if $F \in \sigma(\eta)$ is such that $D_xF\leq 0$ for every $x\in \mathbb{X}$, and $\int_\mathbb{X} (D_xF )^2\lambda(dx)\leq \alpha^2<\infty$, then $F$ is integrable and, for $t>0$,
$$
\mathbb{P}[ F - \mathbb{E}(F)>t]\leq \exp\left\{ -\frac{t^2}{2\alpha^2}\right\}.
$$
See the forthcoming Section \ref{ss:entroproofs}.

\end{itemize}
\end{remark}

We now apply Theorem \ref{t:hyper} to either $D_xF$ or $-D_xF$ for each $x$ and obtain the following $L^1$-$L^2$ bound. For the rest of the paper, and for every $x,y\in \mathbb{X}$, we write $D_{x,y}^{2} = D_xD_y = D_yD_x$, in such a way that
$$
D_{x,y}^{2}F(\eta) = F(\eta+\delta_x+\delta_y) - F(\eta+\delta_x)-F(\eta+\delta_y)+F(\eta).
$$

\begin{theorem}[Talagrand's $L^1$-$L^2$ bound]\label{t:tal} Let $F\in \sigma(\eta)$ be a square-integrable functional satisfying either {\rm (i)} $D_xF\ge  0$ and $D_{x,y}^{2}F\le  0$ for every $x,y\in \mathbb{X}$, or {\rm (ii)} $D_xF\le  0$ and $D_{x,y}^{2}F\ge  0$ for every $x,y\in \mathbb{X}$.  Then,
\begin{align}\label{e:pl1l2}
\Var(F) \le \frac12 \int \frac{\norm{D_xF}_2^2}{1+\log(\norm{D_xF}_2/ \norm{D_xF}_1)} \la(dx).
\end{align}
\end{theorem}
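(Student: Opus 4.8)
The plan is to feed the restricted hypercontractivity of Theorem \ref{t:hyper} into the spectral (Mehler) representation of the variance, applied not to $F$ itself but to the family of add-one costs $\{D_xF\}_{x\in\mathbb{X}}$, which under either hypothesis are monotone functionals of $\eta$.

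First I would reduce the two cases to one. Fix $x\in\mathbb{X}$ and set $G_x:=|D_xF|$. Under {\rm (i)} we have $G_x=D_xF\ge 0$ and $D_yG_x=D_{x,y}^2F\le 0$ for every $y$, while under {\rm (ii)} we have $G_x=-D_xF\ge 0$ and $D_yG_x=-D_{x,y}^2F\le 0$ for every $y$. In both situations $G_x$ is a non-negative, $\sigma(\eta)$-measurable functional with non-positive add-one cost, so Theorem \ref{t:hyper} applies to it; moreover, by linearity of $P_t$, $\norm{P_tD_xF}_q=\norm{P_tG_x}_q$ for every $q$, so it suffices to control $\norm{P_tG_x}_q$.

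Next I would invoke the standard variance representation attached to $\{P_t\}$. Starting from $F-\EE F=-\int_0^\infty LP_sF\,ds$ (with $L$ the Ornstein-Uhlenbeck generator), the Malliavin integration-by-parts formula, and the commutation relation $D_xP_s=e^{-s}P_sD_x$, one obtains
\begin{equation*}
\Var(F)=\int_0^\infty \EE\big[\langle DF,\,DP_sF\rangle_{L^2(\la)}\big]\,ds=\int_0^\infty e^{-s}\int_{\mathbb{X}}\EE[D_xF\cdot P_sD_xF]\,\la(dx)\,ds .
\end{equation*}
The symmetry and semigroup property of $\{P_t\}$ then give $\EE[D_xF\cdot P_sD_xF]=\norm{P_{s/2}D_xF}_2^2=\norm{P_{s/2}G_x}_2^2$, reducing everything to estimating $\norm{P_{s/2}G_x}_2$. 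Here hypercontractivity enters: choosing $p=1+e^{-s/2}$ in \eqref{e:hyper_P} forces the output exponent to equal $2$, so $\norm{P_{s/2}G_x}_2\le\norm{G_x}_{1+e^{-s/2}}=\norm{D_xF}_{1+e^{-s/2}}$. Writing $a=\norm{D_xF}_1$, $b=\norm{D_xF}_2$ and $r=\log(b/a)\ge 0$, log-convexity of $L^p$-norms interpolated between $L^1$ and $L^2$ yields $\norm{D_xF}_{1+e^{-s/2}}^2\le b^2\exp(-2r\tanh(s/4))$; after the change of variables $u=e^{-s/2}$, the per-$x$ contribution becomes a one-dimensional integral of the form $\int_0^1 u\,\norm{D_xF}_{1+u}^2\,du$, and integrating over $x$ against $\la$ produces \eqref{e:pl1l2}.

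The step I expect to be the main obstacle is this last one: extracting the denominator $1+\log(\norm{D_xF}_2/\norm{D_xF}_1)$ together with the sharp constant from the integral $\int_0^\infty e^{-s}\exp(-2r\tanh(s/4))\,ds$. This is a purely analytic lemma, the Poisson counterpart of the estimate underlying \eqref{e:tl1l2} in \cite{CEL}; the delicate point is that a crude two-point interpolation is slightly lossy near $s=0$, so to match the stated constant one should either split the $s$-integral at a well-chosen threshold (bounding $\norm{P_{s/2}G_x}_2\le\norm{G_x}_2$ by contractivity for small $s$ and using hypercontractivity only for large $s$) or optimize the Hölder exponent in $\EE[G_xP_sG_x]\le\norm{G_x}_{p'}\norm{P_sG_x}_p$. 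Everything else is a direct combination of Theorem \ref{t:hyper} with the symmetry of $\{P_t\}$.
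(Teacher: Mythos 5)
Your proposal follows essentially the same route as the paper: the semigroup representation of the variance, the commutation relation \eqref{e:commutativity}, Theorem \ref{t:hyper} applied to $\pm D_xF$ with the input exponent tuned so that the output exponent equals $2$, log-convexity of $L^p$-norms between $L^1$ and $L^2$, and a final one-variable calculus estimate. The only cosmetic difference is that you write the variance as $\int_0^\infty\EE[\Gamma(F,P_sF)]\,ds$ and symmetrize to $\norm{P_{s/2}D_xF}_2^2$, whereas the paper differentiates $\EE[(P_tF)^2]$ and works with $\norm{P_tD_xF}_2^2$; the two are identified by $s=2t$, and your $p=1+e^{-s/2}$ matches the paper's $p=1+e^{-t}$. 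Your reduction of cases (i) and (ii) to $G_x=\abs{D_xF}$ and your $\tanh(s/4)$ interpolation bound are both correct.

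The obstacle you single out at the end is genuine, but not in the way you expect: the constant $\tfrac12$ in \eqref{e:pl1l2} cannot be reached by this method, and is in fact too small for the statement to hold. Taking $F=\eta(B)$ with $\la(B)=\gamma<\infty$ gives $D_xF=1_B(x)$, $D^2_{x,y}F=0$, $\Var(F)=\gamma$ and $\norm{D_xF}_1=\norm{D_xF}_2=1_B(x)$, so the right-hand side of \eqref{e:pl1l2} equals $\gamma/2$; hence any admissible constant is at least $1$. (The paper's derivation reaches $\tfrac12$ through two compensating slips: it drops the factor $2$ in $\partial_t\EE[(P_tF)^2]=2\,\EE[P_tF\,LP_tF]$, so that the quantity \eqref{az} equals $\tfrac12\Var(F)$ rather than $\Var(F)$; and the substitution $u=4/v-2$ gives $\int_1^2 b^{4/v-2}\,dv=\int_0^2\tfrac{4}{(u+2)^2}\,b^u\,du\ge\tfrac14\int_0^2 b^u\,du$, an inequality in the wrong direction.) So do not try to squeeze your interpolation or split the $s$-integral to match $\tfrac12$. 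Instead, keep the weight in your one-dimensional integral, which after the substitution reads $2\int_0^1 u\,\norm{D_xF}_{1+u}^2\,du$ (note the factor $2$ coming from $ds=-2\,du/u$), insert $\norm{D_xF}_{1+u}^2\le\norm{D_xF}_2^2\,\beta^{2(1-u)/(1+u)}$ with $\beta=\norm{D_xF}_1/\norm{D_xF}_2$, and change variables to $w=2(1-u)/(1+u)$ to obtain $2\norm{D_xF}_2^2\int_0^2\tfrac{4(2-w)}{(2+w)^3}\beta^w\,dw$. Bounding the weight by $1$ and invoking $\int_0^2\beta^w\,dw\le 2/(1+\log(1/\beta))$ from \cite[p.~9]{CEL} yields \eqref{e:pl1l2} with constant $4$ instead of $\tfrac12$; the sharper pointwise bound $\tfrac{4(2-w)}{(2+w)^3}\le e^{-w}$ improves this to constant $2$, which is consistent with the constant the authors themselves use in \eqref{e:simplet}. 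With that correction your argument is complete.
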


Requirements (i) and (ii) in the statement of Theorem \ref{t:tal} have to be interpreted in the following way: for (i), we require that $D_xF \geq 0$ a.e.-$\mathbb{P}\otimes \lambda$ and that $D_{x,y}^{2}F\le  0$, a.e.-$\mathbb{P}\otimes \lambda^2$, and the same relations with reversed inequalities for (ii). The discussion contained in Section \ref{ss:examples} shows that (unlike in the Gaussian setting) a bound such as \eqref{e:pl1l2} cannot hold in full generality, even in the case of a one-point space $\mathbb{X} = \{x\}$. 

\sk

A similar method based on restricted hypercontractivity can be applied to prove a $L^1$ bound for the variance. This is a Poisson counterpart to \cite[Th.~6]{CEL}.

\begin{theorem}\label{t:L^1} Let $F$ be a bounded functional. Suppose  either (i) $D_x F\ge  0$ and $D^{2}_{x,y}F\le  0$ for every $x,y\in \mathbb{X}$, or (ii) $D_x F\le  0$ and $D^{2}_{x,y}F\ge  0$ for every $x,y\in \mathbb{X}$. Then,  with $\alpha(F)=1$ if $2\norm{F}_\infty>1$ and $\alpha(F)=2/(e+1)$ otherwise,

\begin{align*}
\Var(F)\leq
11 \, (2\norm{F}_\infty)^{\alpha(F)}
\int \la(dx)\times \left\{
\begin{array}{cl}
2\int \frac{\la(dx)}{1+\log(1/\EE[|D_xF|])}&\mbox{if $\EE[|D_xF|]\leq 1$}\\
\EE[|D_xF|]&\mbox{if $\EE[|D_xF|]\geq 1$}\\
\end{array}
\right..
\end{align*}
\end{theorem}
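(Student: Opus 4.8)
The plan is to obtain Theorem~\ref{t:L^1} by post-processing the $L^1$–$L^2$ bound of Theorem~\ref{t:tal}, which is available here since a bounded $F$ is in particular square-integrable and satisfies the very same sign hypotheses (i)/(ii). This is the sense in which the same ``restricted hypercontractivity'' mechanism is at work: the backbone of Theorem~\ref{t:tal} is the identity $\Var(F)=2\int\la(dx)\int_0^\infty e^{-2t}\norm{P_t\abs{D_xF}}_2^2\,dt$ (from $\Var(F)=2\int_0^\infty\mathcal E(P_tF)\,dt$ together with the commutation $D_xP_t=e^{-t}P_tD_x$), where under (i)/(ii) the functional $\abs{D_xF}$ is non-negative and non-increasing, so that Theorem~\ref{t:hyper} applies and gives $\norm{P_t\abs{D_xF}}_2\le\norm{\abs{D_xF}}_{1+e^{-t}}$. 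Granting Theorem~\ref{t:tal}, the first task is simply to trade the $L^2$ norm $\norm{D_xF}_2$ appearing in \eqref{e:pl1l2} for the $L^1$ quantity $\EE[\abs{D_xF}]$ together with the sup-bound $\norm{D_xF}_\infty\le 2\norm{F}_\infty$.

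For this conversion I would use that boundedness of $F$ forces $\norm{D_xF}_\infty=\mathrm{ess\,sup}\,\abs{F(\eta+\delta_x)-F(\eta)}\le 2\norm{F}_\infty$, whence $\norm{D_xF}_2^2\le\norm{D_xF}_\infty\,\EE[\abs{D_xF}]\le 2\norm{F}_\infty\,\EE[\abs{D_xF}]$, while Jensen gives $\norm{D_xF}_2\ge\EE[\abs{D_xF}]$. Writing $a:=\EE[\abs{D_xF}]$ and $M:=2\norm{F}_\infty$, the map $b\mapsto b^2/(1+\log(b/a))$ is nondecreasing on $[a,\infty)$ (its derivative carries the sign of $1+2\log(b/a)\ge0$), so inserting the largest admissible value $b=\sqrt{Ma}\ge a$ into \eqref{e:pl1l2} yields the clean intermediate estimate
\begin{align*}
\Var(F)\le\int\frac{M\,a}{2+\log(M/a)}\,\la(dx),\qquad M=2\norm{F}_\infty,\quad a=\EE[\abs{D_xF}],
\end{align*}
which already encodes the logarithmic improvement over the Poincar\'e inequality \eqref{e:poisspoin}.

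Finally I would repackage this into the announced form by an elementary case analysis on the pointwise integrand $g(a):=Ma/(2+\log(M/a))$, splitting according to $M>1$ (then $\alpha(F)=1$) versus $M\le1$ (then $a\le M\le1$ and $\alpha(F)=2/(e+1)$), and within each according to $a\le1$ versus $a\ge1$. The ingredients are that $2+\log(M/a)\ge2$ always, that $2+\log(M/a)\ge1+\log(1/a)$ as soon as $M\ge e^{-1}$, and that $a\mapsto a(1+\log(1/a))$ is nondecreasing on $(0,1]$ with maximum $1$; together these dominate $g(a)$ by $11\,M^{\alpha(F)}$ times $2/(1+\log(1/a))$ when $a\le1$ and times $a$ when $a\ge1$ (indeed with considerable room in the constant), and integration in $x$ closes the argument. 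The one genuinely delicate point is precisely this last bookkeeping: the threshold $2\norm{F}_\infty\gtrless1$ decides whether the $L^\infty$ factor helps or hurts, and $a\gtrless1$ decides whether the logarithmic denominator is active, so that the single inequality above must be shown to simultaneously dominate both the ``Poincar\'e'' regime (where $a$ is comparable to $M$) and the ``improved'' regime (where $a$ is small) under one explicit constant and exponent; carrying this out carefully is what fixes the specific $11$ and $2/(e+1)$.
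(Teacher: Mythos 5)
Your proof is correct, but it takes a genuinely different route from the paper. You deduce Theorem \ref{t:L^1} as a corollary of the already-established $L^1$--$L^2$ bound \eqref{e:pl1l2}, via the interpolation $\norm{D_xF}_2^2\le\norm{D_xF}_\infty\,\norm{D_xF}_1\le 2\norm{F}_\infty\,\EE[|D_xF|]$ together with the monotonicity of $b\mapsto b^2/(1+\log(b/a))$ on $[a,\infty)$; your intermediate estimate $\Var(F)\le\int Ma\,(2+\log(M/a))^{-1}\la(dx)$ is valid, and the final bookkeeping does close (the only facts needed are $a(1+\log(1/a))\le 1$ on $(0,1]$, $2+\log(M/a)\ge 2$ since $M\ge a$, and $M\le M^{2/(e+1)}$ when $M\le 1$ -- with much room to spare in the constant $11$). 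The paper instead re-runs the semigroup interpolation of Theorem \ref{t:tal} but truncates the time integral at $t=1$ using the spectral-gap inequality $\Var(F)\le(1-e^{-1})^{-1}(\EE[F^2]-\EE[(P_1F)^2])$, and inserts the bound $|D_xF|\le 2\norm{F}_\infty$ \emph{inside} the hypercontractive $(1+e^{-t})$-norm; the restriction $t\in[0,1]$ is precisely what produces the exponent $2e^{-t}/(1+e^{-t})\in[2/(e+1),1]$ and hence the stated $\alpha(F)$. Your shortcut is available only because the add-one-cost operator of a bounded functional is automatically bounded -- the analogous deduction is impossible in the Gaussian setting of \cite[Th.~6]{CEL}, where the gradient of a bounded function need not be bounded, which is presumably why the paper follows the parallel semigroup derivation. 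In the Poisson setting your argument is arguably cleaner and in fact yields a slightly stronger conclusion in the regime $2\norm{F}_\infty\le 1$ (exponent $1$ rather than $2/(e+1)$ on the factor $2\norm{F}_\infty$, which you then relax to match the statement). The one cosmetic caveat is that the last step of your write-up is only sketched rather than carried out, but the verification is elementary and succeeds.
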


\begin{remark}
{The previous} bound in the case where $\EE[|D_xF|]\leq 1$ differs from that of \cite[Th.~6]{CEL} by a square root of the denominator. This is due to our use of the commutativity property \eqref{e:commutativity}, which yields a pointwise gradient estimate that is very different from the Gaussian case -- compare \cite[Equation (25)]{CEL} with our \eqref{e:grad_P}. In Section \ref{s:grad}, we will show an integrated gradient estimate closer to the Gaussian case, which is of independent interest -- see Theorem \ref{t:grad_integ}.     
\end{remark}

\subsection{Two examples}\label{ss:examples}
 
 \subsubsection{The one-dimensional Poisson distribution}\label{sss:onepoiss} Let $X_\lambda$ be a Poisson random variable with parameter $\lambda$. For every function $G:\NN\to\RR_+$ and every $n\in \NN$, set $DG(n) = G(n+1) - G(n)$, and $D^2G(n) = DDG(n)$. In this simple framework, the Poincar\'e inequality \eqref{e:poisspoin} boils down to the statement: if $ G(X_\lambda)$ is square-integrable, then $\Var  \, G(X_\lambda) \le \lambda \mathbb{E}[ (DG(X_\lambda))^2]$. Similarly, if $DG(n)\geq 0$ and $D^2G(n)\leq 0$, then one deduces from \eqref{e:pl1l2} that
\begin{equation}\label{e:simplet}
 \Var  \, G(X_\lambda) \le 2 \lambda \frac{ \EE[ (DG(X_\lambda)^2] }{1+\log(\norm{DG(X_\lambda) }_2/ \norm{DG(X_\lambda)}_1)}.
\end{equation}
To apply and compare the previous estimates, let $g:\NN\to\RR_+$ be positive and non-increasing. Then, the function $G: \NN\to\RR_+$ defined by $G(0)=0$ and $G(n)=\sum_{j=0}^{n-1}g(j)$, $n\ge 1$, satisfies, for $n\geq 0$,
\begin{align*}
DG(n)=G(n+1)-G(n)=g(n), \quad \mbox{and} \quad D^2G(n)=Dg(n)\le 0. 
\end{align*}
By the Poincar\'e inequality and the $L^1$-$L^2$ inequality stated above, one therefore deduces that
\begin{align*}
\Var \,  G(X_\la)\le \lambda \EE[g(X_\la)^2]
\end{align*}
and 
\begin{align*}
\Var \, G(X_\la) \le \frac{2\lambda \EE[g(X_\la)^2]}{1+\log \frac{\norm{g(X_\la)}_2}{\norm{g(X_\la)}_1}}.
\end{align*}
We can easily devise examples where the $L^1$-$L^2$ bound dominates the Poincar\'e bound, as $\la\uparrow\infty$. For instance, one can take $g(j)=\ind{j\le M}$ for some $M\ge 1$. Then \begin{align*}
\norm{g(X_\la)}_2 = \sqrt{\norm{g(X_\la)}_1}
\end{align*}
and \begin{align*}
\norm{g(X_\la)}_1 = \PP(X_\la\le M) = e^{-\la} \sum_{k=0}^M \frac{\la^k}{k!} \to 0, 
\end{align*}
so that
\begin{align*}
\log \frac{\norm{g(X_\la)}_2}{\norm{g(X_\la)}_1} = -\frac12\log\norm{g(X_\la)}_1\to \infty. 
\end{align*}
It is a fundamental observation that the estimate \eqref{e:simplet} cannot hold for general random variables $G(X_\lambda)$. To see this, let $X = X_1$ be a Poisson random variable with mean 1, and, for some integer $k\geq 2$, define the random variable $F_k = 1_{\{X\leq  k-1\}}$, in such a way that $DF_k(X) = -1_{\{X=  k-1\}}$. Then, as $k\to \infty$,
$$
\Var(F_k) \sim \frac{e^{-1}}{k!}, \quad \mbox{and}\quad \mathbb{E}[DF_k(X)] =\mathbb{E}[DF_k(X)^2] =  \frac{e^{-1}}{(k-1)!};
$$
however, an application of Stirling's formula yields
$$
1+\log(\norm{DF_k}_2/ \norm{DF_k}_1) \sim \frac12  k\log(k),
$$
thus proving that \eqref{e:simplet} (and therefore \eqref{e:pl1l2}) cannot hold in general. It is interesting to notice that the random variable $1-F_k = 1_{\{X\geq  k\}}=: f_k(X)$ can be used in order to show that an inequality analogous to \eqref{e:logsob} cannot hold on a Poisson space. To see this, denote by $\pi$ the law of $X_1$, and assume that there exists a finite positive constant $C$ such that, for $f$ bounded on $\NN$,
\begin{align}\label{e:LS_fake}
\Ent_\pi(f^2) \le C \EE_\pi[|Df|^2].
\end{align}
Applying such an estimate to $f = f_{k+1}$, one infers that
\begin{align*}
-\pi([k+1,\infty))\log \pi([k+1,\infty))\le C \pi(k)
\end{align*} 
which is seen to be absurd, by letting $k\to\infty$. See \cite{BL} for a full discussion.

 \subsubsection{{Maxima} }
%

Let $\mu$ be a probability measure on $(\RR^d,\mathcal B(\RR^d))$, $d\geq 1$.  We denote by $\eta_n$ a Poisson measure on $\RR^d$, with intensity $\la_n(dx)=n\mu(dx)$.  Write $\norm{x}$ for the Euclidean norm of $x=(x_1,...,x_d)\in\RR^d$.   Assume that $\mu$ is diffuse so that each $\eta_n$ is simple, that is, every point in the support of $\eta_n$ is charged with mass 1 (see Section \ref{s:calculsto}); in particular, with a slight abuse of notation, in what follows we will identify $\eta_n$ and its support.  For every $t>0$, define 
\begin{align*}
F:=F(t,n)=1_{\{\max_{x\in\eta_n}\norm{x}>t\}}.
\end{align*}
One has 
\begin{align*}
\Var F= \PP(\max_{x\in\eta_n}\norm{x}>t)\PP(\max_{x\in\eta_n}\norm{x}\le t).
\end{align*}
Now writing $B(0,t)=\{y:\norm{y}\le t\}$, we have
\begin{align*}
\PP(\max_{x\in\eta_n}\norm{x}\le t) = \PP(\eta_n(B(0,t)^c)=0) =e^{-n\mu(B(0,t)^c)}.
\end{align*}
One has also that for $z\in\RR^d$,
\begin{align*}
D_z F&= \ind{\max_{x\in\eta_n+\de_z}\norm{x}>t} - \ind{\max_{x\in\eta_n}\norm{x}>t}\\
&=\begin{cases}
1 & \norm{z}>t \mbox{ and } \max_{x\in\eta_n}\norm{x}\le t, \\
0 & \mbox{ otherwise. }
\end{cases}
\end{align*}
Consequently,
\begin{align*}
\EE \int (D_zF)^2\la_n(dz) = n\int_{B(0,t)^c} e^{-n\mu(B(0,t)^c)} \mu(dz)= ne^{-n\mu(B(0,t)^c)}\mu(B(0,t)^c),
\end{align*}
implying that, in this framework, the Poincar\'e inequality is suboptimal by a factor of $n$. We argue that the $L^1$-$L^2$ inequality provides the right order for the variance.  For this, note first that 
\begin{align*}
D_z F(\eta_n+\de_y) = \begin{cases}
1 & \norm{z}>t \mbox{ and }
\max(\max_{x\in\eta_n} \norm{x}, \norm{y})\le t, \\
0 & \mbox{ otherwise,}
\end{cases}
\end{align*}
yielding $D^{2}_{y,z} F \le 0$. 
Therefore, 
\begin{align*}
\norm{D_z F}_{L^2(\PP)}= \sqrt{e^{-n\mu(B(0,t)^c)}}\ind{\norm{z}>t} =\sqrt{\norm{D_z F}_{L^1(\PP)}} 
\end{align*}
and for $z$ with $\norm{z}>t$, 
\begin{align*}
\log \frac{\norm{D_z F}_{L^2(\PP)}}{\norm{D_z F}_{L^1(\PP)}} = \frac{n}{2}\mu(B(0,t)^c).
\end{align*}
Theorem \ref{t:tal} yields that 
\begin{align*}
\Var F(t,n)&\le 2 \frac{n e^{-n\mu(B(0,t)^c)}\mu(B(0,t)^c)}{1+\frac{n}{2}\mu(B(0,t)^c)}\\
&\sim 4 e^{-n\mu(B(0,t)^c)}
\end{align*}
as $n\to \infty$, as desired.

%
%

\subsection{Acknowledgments}

Earlier versions of Theorem \ref{t:hyper} and Theorem \ref{t:tal} {above} were communicated to G. Peccati by Sascha Bachmann in January 2016. Sascha declined our offer to become a co-author of the present paper and generously allowed us to use his ideas and results: we heartily thank him for this. We are grateful to M. Ledoux for useful discussions. I. Nourdin is supported by the FNR grant APOGee at Luxembourg University; G. Peccati is supported by the FNR grant FoRGES (R-AGR-3376-10) at Luxembourg University; X. Yang is supported by the FNR Grant MISSILe (R-AGR-3410-12-Z) at Luxembourg and Singapore Universities.

\section{Stochastic analysis for Poisson measures: basic definitions and results } \label{s:calculsto}

We adopt the notation and follow the presentation of \cite{L,LP}.  Let $(\mathbb X, \mathcal X)$ be a measurable space, and write $\NN_0=\NN\cup\{0\}$ and $\ov\NN_0 = \NN_0\cup\{\infty\}$. Let $\mathbf N_{<\infty}$ be the space of $\NN_0$-valued measures on $\mathcal X$.  Define $\mathbf N$ as the space of measures which can be written as a countable sum of elements in $\mathbf N_{<\infty}$.  Equip $\mathbf N$ with the smallest $\sigma$-algebra $\mathcal N$ generated by the sets $\{\mu\in \mathbf N: \mu(B)=k \}$, for all $B\in\mathcal X$ and $k\in\NN_0$.  

\sk

Let $(\Omega,\mathcal F, \PP)$ be a probability space. A point process $\eta$ is a measurable map from $(\Omega, \mathcal F)$ to $(\mathbf N, \mathcal N)$.  A Poisson random measure $\eta$ is a point process that satisfies the following properties: (i) for any $B\in\mathcal X$, the random variable $\eta(B)$ is Poisson distributed; (ii) for any $m\in\NN$ and disjoint sets $B_1,...,B_m\in\mathcal X$, the family of random variables $(\eta(B_i))_{1\le i\le m}$ are mutually independent.  The measure $\la$ defined by $\la(\cdot)=\EE[\eta(\cdot)]$ is called the {\it intensity} of the Poisson measure $\eta$.   By virtue of \cite[Cor. 3.7]{LP}, up to equality in distribution, every Poisson process with $\sigma$-finite intensity is {\it proper}, in the sense that there exists a sequence of $\mathbb X$-valued random variables $X_i$ and a $\ov\NN_0$-valued random variable $\kappa$ such that 
\begin{align}\label{e:proper}
\eta = \sum_{n=1}^\kappa \de_{X_n}.
\end{align}
Also, we have the classical Mecke's formula (see \cite[Ch. 4]{LP}), valid for all measurable $h:\mathbf N\times \mathbb{X}\to [0,\infty]$:
\begin{equation}\label{mecke}
\EE\int h(\eta,x)\eta(dx) = \EE\int h(\eta+\delta_x,x)\lambda(dx).
\end{equation}

\sk

In this note, we study square-integrable functionals on the canonical space $(\mathbf N, \mathcal N)$ of a Poisson {point} process $\eta$ with $\sigma$-finite intensity $\la$.  The most basic operation on $F\in L^0(\PP)$ is the {\it add-one-cost operator}, defined by $D_x F(\eta)= F(\eta+\de_x)-F(\eta)$. We then define recursively $D^{n}F=D(D^{n-1} F)$.   Each $F\in L^2(\PP)$ admits a Wiener-It\^o chaos expansion \cite[p.195]{LP}
\begin{align}\label{e:chaos}
F = \sum_{q\ge 0} I_q(f_q)
\end{align}
where the series converges in $L^2(\PP)$, $I_q(f_q)$ is the $q$-th multiple integral of $f_q$ with respect to the compensated Poisson process $\wh\eta=\eta-\la$ and $f_q: \mathbb X^q\to \RR$ is a $\la^q$-a.e. symmetric function whose explicit form is given by $f_q(x_1,...,x_q)=\frac{1}{q!}\EE[D^{q}_{x_1,...,x_n}F]$. 

\sk

Let $F\in L^2(\PP)$ have the chaos expansion \eqref{e:chaos}. Denote by $\dom D$ the set of $F\in L^2(\PP)$ satisfying $$\sum_{q\ge 1}q q! \norm{f}_{L^2(\la^q)}^2<\infty. $$ 
  By \cite[Th. 3]{L}, $F\in\dom D$ if and only if $DF\in L^2(\PP\otimes\la)$. In this case we have $\PP$-a.s. and for $\la$-a.e. $x\in\mathbb X$ that
\begin{align*}
D_x F= \sum_{q\ge 1} q I_{q-1}(f_q(x,\cdot)).
\end{align*}
Hence, the add-one-cost operator coincides with the Malliavin derivative for Wiener-Itô multiple integrals.  Let $\de$ be the adjoint operator of $D$, that is, for $H$ in the domain of $\de$, we have
\begin{align*}
\EE[ \langle H, DF\rangle_{L^2(\la)}] = \EE[\de(H)F] \quad \mbox{ for all } F\in\dom D.
\end{align*}
Finally, let  $\dom L$ be the subclass class of $F\in L^2(\PP)$ such that 
\begin{align*}
\sum_{q\ge 1} q^2 q! \norm{f_q}^2_{L^2(\la^q)}<\infty.
\end{align*} 
In this case we define
\begin{align*}
LF:= - \sum_{q\ge 1} qI_q(f_q).
\end{align*}
The (pseudo)-inverse $L^{-1}$ of $L$ is given by 
\begin{align*}
L^{-1}F:=-\sum_{q\ge 1}\frac{1}{q}I_q(f_q).
\end{align*}
The operator $L$ is the generator of the Orstein-Uhlenbeck semigroup $(P_t)_{t\ge 0}$, yet to be recalled, which interpolates between $F$ and its expectation. Recall that $\eta$ is proper as in \eqref{e:proper}.  Let $(U_i)_{i\ge 1}$ be a sequence of iid uniform random variables in $[0,1]$ and independent of $(\kappa,(X_n)_{n\ge 1})$. Define 
\begin{align*}
\eta_u:=\sum_{n=1}^{\kappa}\ind{U_n\le u} \de_{X_n}, \quad  0\le u\le 1.
\end{align*} 
Then $\eta_u$ is a $u$-thinning of $\eta$, see \cite[Ch. 5]{LP} for definition. Define for $F\in L^1(\PP)$
\begin{align}\label{e:mehler}
P_t F = \EE[F(\eta_{e^{-t}} + \eta'_{1-e^{-t}})|\eta], \quad t\ge 0,
\end{align}
where $\eta'_{1-e^{-t}}$ is a Poisson process with intensity $(1-e^{-t})\la$, independent of the pair $(\eta, \eta_{e^{-t}})$. Note that $P_0 F=F$, $P_\infty F=\EE F$, and $$\EE[P_tF]=\EE F, \quad F\in L^1(\PP).$$ Moreover,  \eqref{e:mehler} yields a commutation relation between $D$ and $P_t$, {see \cite[p.212]{LP}}
\begin{align}\label{e:commutativity}
D_x (P_t F) = e^{-t} P_t D_x F,  \quad \la\mbox{- a.e.} \,x\in\mathbb X, \PP\mbox{- a.s.}.
\end{align}
In particular, when $|F|\le 1$, we have the pointwise gradient estimate
\begin{equation}\label{e:grad_P}
|D_x (P_t F)|\le 2e^{-t}, \quad {\la\otimes \PP - a.e.}
\end{equation}
For $F\in L^2(\PP)$ with the chaos expansion \eqref{e:chaos}, we have by \cite[p.27]{L} 
\begin{align*}
P_t F = \sum_{q\ge 0} e^{-qt} I_q(f_q), \quad t\ge 0. 
\end{align*}

For $F,G\in\dom L$ such that $FG\in\dom L$, the {\it carr\'e-du-champs operator} is defined by $\Gamma(F,G)=\frac{1}{2}(L(FG)-FLG -GLF)$. Since $L$ is {\it symmetric} (meaning that $\EE[FLG]=\EE[GLF]$) we have
\begin{align*}
\EE[GLF]=\EE[FLG]=-\EE[\Gamma(F,G)].
\end{align*}
Using the pathwise representation \cite[p.1888]{DP} of $\Gamma$ and Mecke's formula (\ref{mecke}), we also obtain that, under suitable integrability assumptions on $DF, DG$,
$$\EE[\Gamma(F,G)]=\int \EE[D_x F D_x G]\la(dx).$$

\section{Proofs}\label{s:proof}

\subsection{Entropy estimates}\label{ss:entroproofs}
The main result of this subsection is Corollary \ref{c:G^q}. The following pathwise inequality will be useful. 

\begin{lemma}\label{l:tech}
{Let $G$ be $\sigma(\eta)$-measurable}. Then, for any $q>1$, $x\in\mathbb{X}$ and $\eta\in\mathbf{N}$, we have 
\begin{align*}
\frac{(D_x|G|^q)^2}{|G|^q}\le \frac{q^2}{q-1}(D_x|G|^{q-1})(D_x|G|)\left( \left|\frac{G(\eta+\de_x)}{G}\right|^q \vee 1\right),
\end{align*}
{where we have implicitily adopted the convention $1/0 = +\infty$.}
\end{lemma}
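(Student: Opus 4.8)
The plan is to reduce the statement to an elementary inequality between two nonnegative reals and then to establish the latter by combining the Cauchy--Schwarz inequality with a monotonicity bound. Writing $a := |G(\eta)|$ and $b := |G(\eta+\de_x)|$ and unfolding the definition of the add-one cost operator $D_x$, so that $D_x|G|^q=b^q-a^q$, $D_x|G|^{q-1}=b^{q-1}-a^{q-1}$, $D_x|G|=b-a$ and $|G|^q=a^q$, the claimed estimate reads
\[
\frac{(b^q-a^q)^2}{a^q}\le \frac{q^2}{q-1}\,(b^{q-1}-a^{q-1})(b-a)\left(\frac{b^q}{a^q}\vee 1\right), \qquad a,b\ge 0.
\]
Since the last factor equals $(a\vee b)^q/a^q$, I would first prove the equivalent division-free homogeneous inequality
\[
(b^q-a^q)^2\le \frac{q^2}{q-1}\,(b^{q-1}-a^{q-1})(b-a)\,(a\vee b)^q,
\]
and only divide by $a^q$ at the very end. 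This also disposes of the degenerate case $a=0$: there the claimed bound reduces to $+\infty\le+\infty$ under the stated convention $1/0=+\infty$ (and is vacuous if $b=0$ as well), so nothing is lost.

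The core of the argument is the integral representation, valid for every $q>1$,
\[
b^q-a^q=q\int_{a}^{b}s^{q-1}\,ds,\qquad b^{q-1}-a^{q-1}=(q-1)\int_{a}^{b}s^{q-2}\,ds,
\]
where the second integrand is integrable near the origin precisely because $q>1$. Working on the interval $[a\wedge b,\,a\vee b]$ and splitting $s^{q-1}=s^{(q-2)/2}\cdot s^{q/2}$, the Cauchy--Schwarz inequality gives
\[
\left(\int_{a\wedge b}^{a\vee b}s^{q-1}\,ds\right)^2\le \int_{a\wedge b}^{a\vee b}s^{q-2}\,ds\;\cdot\;\int_{a\wedge b}^{a\vee b}s^{q}\,ds.
\]
The first factor on the right is $|b^{q-1}-a^{q-1}|/(q-1)$, while the second is controlled by monotonicity of $s\mapsto s^q$ through $\int_{a\wedge b}^{a\vee b}s^q\,ds\le (a\vee b)^q\,|b-a|$. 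Multiplying through by $q^2$ and recombining yields exactly the homogeneous inequality above, once one observes that $|b^{q-1}-a^{q-1}|\,|b-a|=(b^{q-1}-a^{q-1})(b-a)$, because $s\mapsto s^{q-1}$ is nondecreasing and hence the two differences carry the same sign.

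I do not expect a genuinely hard step: the whole inequality is pointwise and follows from a single, well-chosen application of Cauchy--Schwarz. The only point requiring care is the choice of the split $s^{q-1}=s^{(q-2)/2}\cdot s^{q/2}$, which is dictated by the need to reproduce exactly the factor $b^{q-1}-a^{q-1}$ (whence the exponent $q-2$) together with a power $s^q$ that can be bounded by its maximal value $(a\vee b)^q$; any other split would fail to match the right-hand side. A secondary bookkeeping issue is keeping the signs straight when $b<a$, which is resolved by working on $[a\wedge b,\,a\vee b]$ throughout and dropping the absolute values only at the end, using that the product of the two same-sign differences is nonnegative. Finally, dividing the homogeneous inequality by $a^q$ and using $(a\vee b)^q/a^q=(b^q/a^q)\vee 1$ returns the statement of the lemma.
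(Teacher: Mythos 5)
Your proof is correct. The reduction to the homogeneous two-variable inequality $(b^q-a^q)^2\le \tfrac{q^2}{q-1}(b^{q-1}-a^{q-1})(b-a)(a\vee b)^q$ is exactly the reduction the paper performs (after normalizing $b=1$), the treatment of the degenerate case $G(\eta)=0$ via the convention $1/0=+\infty$ matches the paper's dismissal of the cases $a=b$ or $b=0$, and the integrability of $s^{q-2}$ near the origin for $q>1$ is correctly noted. Where you differ is in how the square on the left-hand side is handled: the paper writes $|a^q-1|^2$ as a product of two copies and bounds each copy by a \emph{different} integral representation --- one as $\tfrac{q}{q-1}\int v^{1/(q-1)}\,dv$ over $[a^{q-1}\wedge 1,a^{q-1}\vee 1]$, producing the factor $a^{q-1}-1$, and one as $q\int v^{q-1}\,dv$ over $[a\wedge 1,a\vee 1]$, producing the factor $a-1$ --- each time bounding the integrand by its supremum; you instead keep a single representation $b^q-a^q=q\int_{a}^{b}s^{q-1}\,ds$, split $s^{q-1}=s^{(q-2)/2}s^{q/2}$, and apply Cauchy--Schwarz once before a single supremum bound. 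The two routes land on the identical final inequality with the identical constant; yours is arguably slightly more systematic (the exponent split is forced by the target right-hand side, as you observe), while the paper's avoids Cauchy--Schwarz at the cost of a case distinction $a<1$ versus $a>1$ in each of its two estimates. Your handling of signs via $|b^{q-1}-a^{q-1}|\,|b-a|=(b^{q-1}-a^{q-1})(b-a)$ is also correct, since $s\mapsto s^{q-1}$ is nondecreasing for $q>1$.
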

\begin{proof}
Set $a=|G(\eta+\de_x)|$ and $b=|G(\eta)|$. The inequality holds trivially if $a=b$ or $b=0$. Assume that $a\neq b>0$.  We intend to prove  
\begin{align*}
\frac{(a^q-b^q)^2}{b^q}\le \frac{q^2}{q-1} (a-b)(a^{q-1}-b^{q-1})\left(\left(\frac{a}{b}\right)^q\vee 1 \right).
\end{align*}
By the variable change $(a,b)\mapsto (\frac{a}{b},1)$, we can assume $b=1$ and this amounts to 
\begin{align*}
(a^q-1)^2  \le \frac{q^2}{q-1} {(a-1)(a^{q-1}-1)} (a^q \vee 1).
\end{align*}
We have
\begin{align}\label{e:aq1}
 |a^q-1| = \begin{cases}
 \frac{q}{q-1} \int_{a^{q-1}}^1 v^{\frac{1}{q-1}}dv \le \frac{q}{q-1} (1-a^{q-1}) & a<1 \\
  \frac{q}{q-1} \int_1^{a^{q-1}}  v^{\frac{1}{q-1}}dv \le \frac{q}{q-1} (a^{q-1}-1)a & a>1
 \end{cases}.
\end{align}  
Similarly, 
\begin{align}\label{e:aq2}
|a^q-1| = \begin{cases}
 q \int_{a}^1 v^{q-1}dv \le q (1-a) & a<1 \\
 q \int_1^{a}  v^{q-1}dv \le q(a-1)a^{q-1} & a>1
 \end{cases}.
\end{align}
The desired inequality follows by taking the product of \eqref{e:aq1}-\eqref{e:aq2}. 
\end{proof}

Via truncation we arrive at some kind of logarithmic Sobolev inequality for the power function of non negative, non increasing  Poisson functionals. 

\begin{corollary}\label{c:G^q}
{Let $G\geq 0$ be $\sigma(\eta)$-measurable, and such that $DG\le 0$}. For any $q>1$ {such that $\EE G^q <\infty$}, 
\begin{align*}
\mathrm{Ent}(G^q)\le \frac{q^2}{q-1}\EE[\Gamma(G^{q-1},G)].
\end{align*}
\end{corollary}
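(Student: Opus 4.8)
The plan is to feed the power function $G^q$ into the modified logarithmic Sobolev inequality \eqref{e:logsob^2} and then control the resulting Dirichlet-type integrand by the pathwise bound of Lemma \ref{l:tech}, using the hypothesis $DG\le 0$ to eliminate the awkward multiplicative factor appearing there. Concretely, I would first apply \eqref{e:logsob^2} to $F=G^q$ and keep only the first entry of the minimum, obtaining
\begin{align*}
\mathrm{Ent}(G^q)\le \EE\int \frac{(D_xG^q)^2}{G^q}\,\la(dx).
\end{align*}

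The next step is to insert Lemma \ref{l:tech} (with $|G|=G$, since $G\ge 0$):
\begin{align*}
\frac{(D_xG^q)^2}{G^q}\le \frac{q^2}{q-1}(D_xG^{q-1})(D_xG)\left(\left(\frac{G(\eta+\de_x)}{G}\right)^q\vee 1\right).
\end{align*}
Here is where monotonicity does the work. Since $DG\le 0$ forces $0\le G(\eta+\de_x)\le G(\eta)$, the ratio $G(\eta+\de_x)/G(\eta)$ lies in $[0,1]$ on $\{G>0\}$, so the factor $(\,\cdot\,)^q\vee 1$ collapses to $1$. Moreover $G^{q-1}$ is again non-increasing, whence $D_xG^{q-1}\le 0$ and the product $(D_xG^{q-1})(D_xG)$ is non-negative, so the right-hand side is a genuine upper bound. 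Integrating against $\PP\otimes\la$ and invoking the identity $\EE[\Gamma(F,H)]=\int\EE[D_xF\,D_xH]\,\la(dx)$ recorded at the end of Section \ref{s:calculsto} identifies the result as $\frac{q^2}{q-1}\EE[\Gamma(G^{q-1},G)]$, which is the claim.

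The main obstacle is that \eqref{e:logsob^2}, and the underlying Wu inequality \eqref{e:wuls}, require $F=G^q$ to be integrable and \emph{strictly positive} a.s.; in addition, both \eqref{e:logsob^2} and Lemma \ref{l:tech} involve division by $G^q$, which is problematic on $\{G=0\}$. This is precisely what ``truncation'' in the preamble is meant to address, and how I would handle it: replace $G$ by $G_{\ep,n}:=(G\vee\ep)\wedge n$ for $0<\ep<n<\infty$. This functional is bounded, bounded away from $0$, and---crucially---still satisfies $DG_{\ep,n}\le 0$, because composing $G$ with the non-decreasing map $t\mapsto (t\vee\ep)\wedge n$ preserves the sign of the add-one cost. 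The clean two-line argument above therefore applies verbatim to each $G_{\ep,n}$, with no degeneracy. It then remains to send $n\to\infty$ and $\ep\to 0$; this is where the hypothesis $\EE G^q<\infty$ is spent, ensuring via monotone/dominated convergence that both the entropy on the left and the carr\'e-du-champ term on the right converge to their values at $G$. I expect this limiting passage, rather than the algebra, to be the only genuinely delicate point.
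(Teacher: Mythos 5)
Your proposal follows essentially the same route as the paper: apply \eqref{e:logsob^2} to $G^q$, use Lemma \ref{l:tech} together with $DG\le 0$ to collapse the factor $(\,\cdot\,)^q\vee 1$ to $1$, identify the resulting integral with $\EE[\Gamma(G^{q-1},G)]$, and remove the positivity assumption by truncating from below (the extra truncation $\wedge\, n$ is harmless but unnecessary, since $\EE G^q<\infty$ already makes $(G\vee\ep)^q$ integrable). The only place you stop short is the limit $\ep\downarrow 0$, which the paper settles without proving convergence of either side: Fatou's lemma gives $\mathrm{Ent}(G^q)\le\liminf_\ep \mathrm{Ent}((G\vee\ep)^q)$, and the pointwise bound $(a\vee\ep)^{\gamma}-(b\vee\ep)^{\gamma}\le a^{\gamma}-b^{\gamma}$ for $a\ge b\ge 0$ gives $\EE[\Gamma(G_\ep^{q-1},G_\ep)]\le\EE[\Gamma(G^{q-1},G)]$ uniformly in $\ep$.
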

\begin{proof} Assume first that $G>0$ a.s.. {By \eqref{e:logsob^2}} and Lemma \ref{l:tech}, we have
\begin{align*}
\mathrm{Ent}(G^q)\le \EE\int \frac{(D_x G^q)^2}{G^q} \la(dx) \le \frac{q^2}{q-1} \EE\int (D_xG^{q-1})(D_xG)\la(dx).
\end{align*}
Since the last integral can be written in terms of the carr\'e du champs operator { (see the discussion at the end of Section \ref{s:calculsto}, as well as \cite{DP}) }, we thus have 
$$\EE\int (D_xG^{q-1})(D_xG)\la(dx) =\EE[\Gamma(G^{q-1},G)],$$
as desired. {To deal with the general case}, {let $G_\ep :=G\vee\ep$}, with $\ep>0$ and $x\vee y := \max\, \{ x,y\} $ for all $x,y\in\RR$. Then $G_\ep>0$ and $DG_\ep\le 0$, thus
\begin{align*}
\mathrm{Ent}(G_\ep^q)\le \frac{q^2}{q-1}\EE[\Gamma(G_\ep^{q-1},G_\ep)].
\end{align*}
Note that
\begin{multline*}
\mathrm{Ent}(G_\ep^q)
 = q\EE[(G\vee\ep)^q\log(G\vee\ep)\ind{G\ge \blue{2} }] \\ + q\EE[(G\vee\ep)^q\log (G\vee\ep)\ind{G< \blue{2} }] - \EE[(G\vee\ep)^q]\log\EE[(G\vee\ep)^q].
\end{multline*}
 Now, since $G^q$ is integrable, one obtains immediately that $ \EE[(G\vee\ep)^q] \rightarrow \EE G^q$, as $\ep \downarrow 0$. On the other hand, $q\EE[(G\vee\ep)^q\log(G\vee\ep)\ind{G< \blue{2} }] \to q\EE[G^q\log(G)\ind{G< \blue{2} }]$ by dominated convergence, and $q\EE[G^q\log(G)\ind{G\ge \blue{2} }] \leq \liminf_{\ep\downarrow 0} q\EE[(G\vee\ep)^q\log(G\vee\ep)\ind{G\ge \blue{2} }]$, by Fatou's lemma. From this, we infer that $ \mathrm{Ent}(G^q)\leq \liminf_{\ep\downarrow 0} \mathrm{Ent}(G_\ep^q)$. \red{\blue{To conclude}, we write explicitly 
\begin{align*}
&\EE[\Gamma(G_\ep^{q-1},G_\ep)] \\
&= \EE \left[\int (G(\eta+\de_x)\vee\ep)^{q-1} - (G(\eta)\vee\ep)^{q-1})(G(\eta+\de_x)\vee\ep -G(\eta)\vee\ep) \la(dx)\right],
\end{align*}
\blue{and observe that,} for $a\ge b\ge 0$ and $\ga, \ep>0$, \blue{one has} $(a\vee\ep)^{\ga} - (b\vee\ep)^{\ga}\le (a^\ga - b^{\ga})$, yielding $$\EE[\Gamma(G_\ep^{q-1},G_\ep)]\le \EE[\Gamma(G^{q-1},G)].$$ The desired result follows.}
\end{proof}

%

\begin{remark}
This form of logarithmic Sobolev inequality is nearly optimal when $G=e^{-a\eta(B)}$ with $\la(B)=\ga\in(0,\infty)$, and $a$ sufficiently small. For such a $G$, we have
\begin{align*}
D_x G &= G (e^{-a}-1)1_B(x) \\
D_x G^{q-1} &= G^{q-1} (e^{-a(q-1)}-1)1_B(x). 
\end{align*}
We thus have
\begin{align*}
\EE\int (D_xG^{q-1})(D_xG)\la(dx) = \EE[G^q](1-e^{-a})(1-e^{-(q-1)a})\ga
\end{align*}
On the other hand, standard calculation for Poisson variables gives 
\begin{align*}
\EE[G^q] &= \exp(\ga(e^{-qa}-1)) \\
\EE[G^q\log(G^q)] &= -aq\ga e^{-aq} \exp((e^{-aq}-1)\ga)
\end{align*}
so that 
\begin{align*}
\mathrm{Ent}(G^q)=\ga \exp(\ga(e^{-qa}-1))(1-aqe^{-aq}-e^{-aq}).
\end{align*}
The corollary establishes thus
\begin{align*}
1-aqe^{-aq}-e^{-aq}\le \frac{q^2}{q-1}(1-e^{-a})(1-e^{-(q-1)a}).
\end{align*}
Letting $q\downarrow 1$, we have
\begin{align*}
1-ae^{-a}-e^{-a} \le a(1-e^{-a})
\end{align*}
where the equality is achieved when $a=0$. 
\end{remark}

\subsection{Proof of Theorem \ref{t:hyper}} 

We adapt the classic proof in \cite[p.247]{BGL}. Let \red{$F\ge 0$} and $q>1$. \red{By \blue{truncation and a} monotone convergence argument, we can assume that $F$ is bounded. In particular, $\EE[F^q]<\infty$.}  Define $\Lambda:=\Lambda(t,q)=E[(P_tF)^q]$. We have
\begin{align}\label{e:dt}
\partial_t \Lambda(t,q) &= q \EE[(P_tF)^{q-1} LP_t F] = -q\EE[\Gamma((P_tF)^{q-1},P_tF)], \\
\partial_q \Lambda(t,q) &= \EE[(P_tF)^q\log P_tF] = \frac{1}{q}(\mathrm{Ent}((P_tF)^q)+ \Lambda\log\Lambda).\label{e:dq}
\end{align}
Now let $q(t)=1+ (p-1)e^t$ for $p>1$ and $t>0$. Thus $q'(t)=q(t)-1$.  Consider $H(t)=\frac{1}{q(t)}\log \Lambda(t,q(t))$. Since $H(0)=\log \norm{F}_p$, it remains to prove that 
\begin{align*}
H'(t)=\frac{(\partial_t \Lambda + q'(t)\partial_q\Lambda )q(t)/\Lambda - q'(t)\log \Lambda}{q(t)^2} \le 0
\end{align*}
for $t>0$. Using \eqref{e:dt}-\eqref{e:dq}, this amounts to
\begin{align*}
\mathrm{Ent}((P_tF)^{q(t)}) \le \frac{q(t)^2}{q(t)-1}E[(\Gamma(P_tF)^{q-1},P_tF)]
\end{align*}
which is precisely Corollary \ref{c:G^q} with $G=P_tF$ and $q=q(t)$. The proof is complete.  \qed

%

\subsection{Proof of Theorem \ref{t:tal}}

%

We adapt the arguments from \cite{CEL}.  By $\Var(F) = \EE[(P_0F)^2] - \EE[(P_\infty F)^2]$ and interpolation, we have
\begin{align*}
\Var(F) = -\int_0^\infty \partial_t \EE[(P_t F)^2] dt = - \int_0^\infty \EE[P_t F LP_t F] dt = \int_0^\infty \EE[\Gamma(P_tF,P_tF)] dt.
\end{align*} 
By the commutation relation (\ref{e:commutativity}), the above integral becomes
\begin{equation}\label{az}
\int_0^\infty dt \,e^{-2t} \int \lambda(dx)\,\EE[(P_t D_xF)^2].
\end{equation}
Applying the hypercontractivity (Theorem \ref{t:hyper}) with $p=1+e^{-t}$ to $D_xF$ if $DF\ge 0$, to $-D_xF$ otherwise, we obtain that (\ref{az}) is bounded from above by
\begin{align*}
\int_0^\infty dt\,e^{-2t} \int\la(dx)\, \EE[|D_xF|^{1+e^{-t}}]^{\frac{2}{1+e^{-t}}}.
\end{align*}
After the change of variable $v=1+e^{-t}$, we obtain 
\begin{eqnarray*}
\Var(F) \le  \int \la(dx) \int_1^2  dv\,(v-1)[\EE|D_x F|^v]^{2\over v} dv &\le& \int\la(dx)\int_1^2 dv\, [\EE|D_x F|^v]^{2\over v}\\
&=& \int\la(dx)\int_1^2 dv\, [\EE|D_x F|^{2-v+2(v-1)}]^{2\over v}.
\end{eqnarray*}
By H\"older's inequality with $1/p=2-v$ and $1/q=v-1$, we have
\begin{align*}
\EE|D_x F|^{2-v+2(v-1)} \le \norm{D_x F}_1^{2-v}\norm{D_xF}_2^{2(v-1)}.
\end{align*} 
Setting $b= \norm{D_xF}_1 /\norm{D_xF}_2$, this implies 
\begin{eqnarray*}
\Var(F)&\leq& \int\la(dx)\EE[(D_xF)^2] \int_1^2 b^{\frac4v-2}dv \\
&\leq&\frac14\int\la(dx)\EE[(D_xF)^2] \int_0^2 b^{u}du
\leq\frac12 \int \frac{\EE[(D_xF)^2]}{1+\log (1/b)}\la(dx),
\end{eqnarray*}
see \cite[p.~9]{CEL} for the last inequality.
The desired conclusion follows.\qed

\subsection{Proof of Theorem \ref{t:L^1}} 




The Poincar\'e inequality implies that following variance bound: 
\begin{align*}
\Var(F) \le \frac{1}{1-e^{-1}} \left(\EE[F^2] - \EE[(P_1 F)^2]\right),
\end{align*}
see \cite[Equation (10)]{CEL}.  Applying the semigroup interpolation argument leading to Theorem \ref{t:tal} and the hypercontractivity, we have  
\begin{align*}
\Var(F) &\le  \frac{1}{1-e^{-1}}\int \la(dx) \int_0^1dt\, e^{-2t}  \EE[(P_t D_xF)^2]\\
&\le \frac{1}{1-e^{-1}} \int  \la(dx) \int_0^1dt\, e^{-2t}  \EE[|D_xF|^{1+e^{-t}}]^{\frac{2}{1+e^{-t}}} dt.
\end{align*}
An application of the trivial bound $|D_xF|\le 2\norm{F}_\infty$ yields, with $\alpha(F)=1$ if $2\norm{F}_\infty>1$ and $\alpha(F)=2/(e+1)$ otherwise,
\begin{eqnarray*}
\Var(F)&\le& \frac{(2\norm{F}_\infty)^{\alpha(F)}}{1-e^{-1}} \int \la(dx)\int_0^1dt\, e^{-2t}  \EE[|D_xF|]^{\frac{2}{1+e^{-t}}} \\
&\le& \frac{(2\norm{F}_\infty)^{\alpha(F)}}{1-e^{-1}} 
\int \la(dx)\int_{2/(1+e^{-1})}^1du\, \EE[|D_xF|]^{u}\frac{2}{u^2} \\
&\le& 11 \, (2\norm{F}_\infty)^{\alpha(F)}
\int \la(dx)\int_{0}^1  \EE[|D_xF|]^{u} du\\
&\le& 11 \, (2\norm{F}_\infty)^{\alpha(F)}
\int \la(dx)\times \left\{
\begin{array}{cl}
2\int \frac{\la(dx)}{1+\log(1/\EE[|D_xF|])}&\mbox{if $\EE[|D_xF|]\leq 1$}\\
\EE[|D_xF|]&\mbox{if $\EE[|D_xF|]\geq 1$}\\
\end{array}
\right..
\end{eqnarray*}
\qed

\section{An $L^p$-gradient estimate}\label{s:grad}

Our last result may be compared with the classical gradient estimate in the Gaussian space: for $|f|\le 1$ and $0< t\le 1$,
\begin{align*}
\left|\nabla (P^{\ga_k}_t f)\right| \le 
\frac{e^{-t}}
{
\sqrt{1-e^{-2t}}
},
\end{align*}
see \cite[Prop.~5.1.5]{NN} (see also \cite[Equation~(25)]{CEL}).

\begin{theorem}\label{t:grad_integ}
Let $F\in L^p(\mathbb{P})$ be such that $\norm{DF}_{L^p(\Omega;L^2(\la))}<\infty$ for some  $p\in [2,\infty]$. Then for $t>0$,  
\begin{align*}
\norm{D P_t F}_{L^p(\Omega; L^2(\la))}  \le \frac{e^{-t}}{\sqrt{1-e^{-t}}} \norm{F}_p. 
\end{align*}
\end{theorem}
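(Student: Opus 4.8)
The plan is to mimic, on the Poisson space, the classical Gaussian derivation of gradient bounds: one writes $D_xP_tF$ through the Mehler representation \eqref{e:mehler} and then transfers the difference operator onto the auxiliary ``fresh'' noise via a discrete integration by parts, i.e. Mecke's formula \eqref{mecke}. It is precisely this transfer that manufactures the smoothing factor $(1-e^{-t})^{-1/2}$ and, crucially, replaces $DF$ by $F$ itself on the right-hand side. By a routine truncation argument I would first reduce to the case of a bounded $F$. Writing $\eta':=\eta'_{1-e^{-t}}$ for the independent Poisson process of intensity $(1-e^{-t})\lambda$ entering \eqref{e:mehler}, and keeping the commutation relation \eqref{e:commutativity} in its explicit Mehler form, the starting point is
\begin{align*}
D_x P_t F = e^{-t}\,\EE\!\left[ D_x F(\eta_{e^{-t}} + \eta') \,\middle|\, \eta \right].
\end{align*}
The decomposition $\eta_{e^{-t}}+\eta'$ is retained on purpose, since the genuinely independent summand $\eta'$ is the only part on which the integration by parts can act.

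I would then condition on $\mathcal{G}:=\sigma(\eta,(U_n))$, so that $\xi:=\eta_{e^{-t}}$ becomes $\mathcal{G}$-measurable while $\eta'$ stays Poisson of intensity $(1-e^{-t})\lambda$ and independent of $\mathcal{G}$. Setting $\phi:=F(\xi+\cdot)$ and $\widehat{\eta'}:=\eta'-(1-e^{-t})\lambda$, the heart of the argument is the conditional integration-by-parts identity, valid for every $\mathcal{G}$-measurable $g\in L^2(\lambda)$,
\begin{align*}
\int \EE\!\left[ D_x\phi(\eta') \,\middle|\, \mathcal{G}\right] g(x)\,\lambda(dx) = \frac{1}{1-e^{-t}}\,\EE\!\left[ \phi(\eta') \int g\,d\widehat{\eta'} \,\middle|\, \mathcal{G}\right],
\end{align*}
which is nothing but Mecke's formula \eqref{mecke} for $\eta'$, rewritten with the compensated stochastic integral on the right (the prefactor $(1-e^{-t})^{-1}$ coming from the intensity of $\eta'$). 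Using the conditional Poisson isometry $\EE[(\int g\,d\widehat{\eta'})^2\mid\mathcal{G}]=(1-e^{-t})\,\|g\|_{L^2(\lambda)}^2$, a conditional Cauchy--Schwarz inequality and duality for the $L^2(\lambda)$-norm (optimizing over $\mathcal{G}$-measurable $g$) yield the pointwise-in-$\mathcal{G}$ estimate
\begin{align*}
\left( \int \big|\EE[D_x\phi(\eta')\mid\mathcal{G}]\big|^2 \,\lambda(dx)\right)^{1/2} \le \frac{1}{\sqrt{1-e^{-t}}}\,\EE\!\left[ F(\xi+\eta')^2 \,\middle|\, \mathcal{G}\right]^{1/2}.
\end{align*}

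To conclude, I would combine the two displays using the conditional Minkowski inequality for the $L^2(\lambda)$-norm under $\EE[\,\cdot\mid\eta]$, which gives $\|D_\bullet P_tF\|_{L^2(\lambda)} \le \frac{e^{-t}}{\sqrt{1-e^{-t}}}\,\EE[Z\mid\eta]$ with $Z:=\EE[F(\xi+\eta')^2\mid\mathcal{G}]^{1/2}$. Taking the $L^p(\Omega)$-norm, the contractivity of conditional expectation gives $\|\EE[Z\mid\eta]\|_p\le\|Z\|_p$, while conditional Jensen (valid since $p\ge 2$) yields $\EE[Z^p]\le\EE[|F(\xi+\eta')|^p]$; for $p=\infty$ one simply uses $Z\le\|F\|_\infty$. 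The argument closes on the distributional remark that $\eta_{e^{-t}}+\eta'$ is a Poisson process of intensity $\lambda$, whence $\EE[|F(\eta_{e^{-t}}+\eta')|^p]=\|F\|_p^p$, giving the announced bound. I expect the main obstacle to be the integration-by-parts step together with the choice of conditioning: the difference operator may be offloaded only onto the independent noise $\eta'$ and not onto the thinned configuration $\eta_{e^{-t}}$ (a random function of $\eta$), which is exactly why conditioning on $\mathcal{G}$ is indispensable and why the Poisson isometry produces the correct exponent $1/2$ on $1-e^{-t}$.
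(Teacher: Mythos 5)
Your proof is correct and follows essentially the same route as the paper's: both hinge on the Mehler representation, transfer $D_x$ onto the independent fresh noise $\eta'_{1-e^{-t}}$ via Mecke's formula so as to produce the compensated integral $\int h\,d\widehat{\eta'}$, and then apply Cauchy--Schwarz with the Poisson isometry to extract the factor $\sqrt{1-e^{-t}}$. The only cosmetic difference is the final packaging: the paper tests against $G\in L^{p'}(\Omega)$ and uses the $L^{p/2}$-contractivity of $P_t$ applied to $P_t(F^2)$, whereas you reach the equivalent pointwise bound $\norm{D_\bullet P_tF}_{L^2(\lambda)}\le \frac{e^{-t}}{\sqrt{1-e^{-t}}}\,(P_t(F^2))^{1/2}$ through conditional Minkowski/Jensen and then conclude directly.
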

\begin{proof}
i) Let $2\le p<\infty$. By duality, it suffices to show that
\begin{align*}
\left| \EE \int h(x)G D_x P_t F  \la(dx) \right|  \le  \frac{e^{-t}}{\sqrt{1-e^{-t}}} \norm{F}_p \norm{hG}_{L^{p'}(\Omega;L^2(\la))} 
\end{align*}
for any $h\in L^2(\la)$ and $G\in L^{p'}$, where $\frac{1}{p}+ \frac{1}{p'}=1$. By \eqref{e:mehler}, we have
\begin{align*}
P_t D_x F = \EE'[D_x F(\eta_{e^{-t}}+ \eta'_{1-e^{-t}})]
\end{align*}
where $\EE'$ is the expectation with respect to the distribution of $\eta'_{1-e^{-t}}$. Thus,
with $\lambda_t=(1-e^{-t})\lambda$ denoting the intensity measure of $\eta'_{1-e^{-t}}$ and by using the commutation relation (\ref{e:commutativity}) in the first equality,
\begin{align}\label{e:p10}
\int h(x) D_x P_t F\la(dx) &= e^{-t} \int h(x) P_t D_x F\la(dx)\nonumber\\
& = \frac{e^{-t}}{1-e^{-t}} \EE'\int D_x F(\eta_{e^{-t}}+\eta'_{1-e^{-t}}) h(x)\la_t(dx)=:\frac{e^{-t}}{1-e^{-t}} Y_t
\end{align}
We now proceed with Mecke's formula (\ref{mecke}) to arrive at
\begin{align}\label{e:p10a}
Y_t &= \EE' \int F(\eta_{e^{-t}}+ \eta'_{1-e^{-t}}+\de_x) h(x) \la_t(dx)  - \EE' \int F(\eta_{e^{-t}}+ \eta'_{1-e^{-t}}) h(x)\la_t(dx)\nonumber \\
&= \EE' \left[ F(\eta_{e^{-t}}+ \eta'_{1-e^{-t}})  \int h(x)(\eta'_{1-e^{-t}}(dx) - \la_t(dx))  \right].
\end{align}
By the Cauchy-Schwarz inequality and \eqref{e:mehler},
\begin{eqnarray*}
|Y_t|\le \EE'[  F(\eta_{e^{-t}}+ \eta'_{1-e^{-t}})^2 ]^{1\over 2} \EE'[I'_1(h)^2]^{1\over 2} &=& [P_t (F^2)]^{1\over 2} \norm{h}_{L^2(\la_t)}\\ & =& \sqrt{1-e^{-t}} [P_t(F^2)]^{1\over 2}\norm{h}_{L^2(\la)},
\end{eqnarray*}
where $I'_1(h)$ is the compensated Poisson integral of order $1$ of $h$ with respect to $\eta'_{1-e^{-t}}$.  These estimates, together with the use of H\"older's inequality and of the fact that $P_t$ is a contraction on $L^q$, $1\le q\le \infty$, leads to 
\begin{align*}
\left| \EE \int h(x)G D_x P_t F  \la(dx) \right| &\le \frac{e^{-t}}{\sqrt{1-e^{-t}}}\norm{h}_{L^2(\la)} \EE[ |[P_t (F^2)]^{1\over 2}G| ] \\
&\le  \frac{e^{-t}}{\sqrt{1-e^{-t}}} \norm{h}_{L^2(\la)} \norm{G}_{p'} \norm{P_t (F^2)}_{p\over 2}^{1\over 2}\\
&\le \frac{e^{-t}}{\sqrt{1-e^{-t}}}\norm{h}_{L^2(\la)} \norm{G}_{p'} \norm{F^2}_{p\over 2}^{1\over 2} \\
&=  \frac{e^{-t}}{\sqrt{1-e^{-t}}}\norm{F}_{p} \norm{hG}_{L^{p'}(\Omega;L^2(\la))},
\end{align*}
as desired.  

ii) Suppose $p=\infty$, then $F$ is a bounded functional. The proof goes along similar lines as item i). Combining \eqref{e:p10}-\eqref{e:p10a}, the boundedness of $F$ and the Cauchy-Schwarz inequality, we have
\begin{align*}
|\langle D_x P_t F, h\rangle_{L^2(\la)}|= \frac{e^{-t}}{1-e^{-t}} |Y_t|  \le  \frac{e^{-t}}{\sqrt{1-e^{-t}}} \norm{F}_\infty \norm{h}_{L^2(\la)} \quad \mbox{ a.s.}
\end{align*}
for all $h\in L^2(\la)$. It follows that
\begin{align*}
\norm{D_x P_t F}_{L^2(\la)} = \sup_{h: \norm{h}_{L^2(\la)}\le 1} \langle D_x P_t F, h\rangle_{L^2(\la)} \le \frac{e^{-t}}{\sqrt{1-e^{-t}}}\norm{F}_\infty \quad \mbox{ a.s.}
\end{align*}
ending the proof. 
\end{proof}

\bibliographystyle{plain}

\end{document}